\documentclass[11pt,a4paper]{amsart}
\usepackage{latexsym,amsmath,amssymb,amsthm,amscd}
\usepackage[alphabetic]{amsrefs}
\usepackage[all]{xy}
\pagestyle{headings}
\theoremstyle{definition}
\newtheorem{Ex}{Example}[section]
\newtheorem{Def}[Ex]{Definition}
\newtheorem{Not}[Ex]{Notation}
\newtheorem{Rem}[Ex]{Remark}
\newtheorem{Case}{Case}
\theoremstyle{plain}
\newtheorem{Thm}[Ex]{Theorem}
\newtheorem{Prop}[Ex]{Proposition}
\newtheorem{Lem}[Ex]{Lemma}
\newtheorem{Cor}[Ex]{Corollary}
\newcommand{\Z}{\mathbb{Z}}
\newcommand{\N}{\mathbb{N}}

\DeclareMathOperator{\Hom}{Hom}
\DeclareMathOperator{\Mor}{Mor}
\DeclareMathOperator{\Aut}{Aut}
\newcommand{\Rep}{\mathrm{Rep}}
\newcommand{\GL}{\mathrm{GL}}

\DeclareMathOperator{\im}{Im}
\DeclareMathOperator{\id}{id}
\DeclareMathOperator{\Ker}{Ker}
\DeclareMathOperator{\res}{res}
\DeclareMathOperator{\Der}{Der}
\DeclareMathOperator{\ind}{ind}
\DeclareMathOperator{\soc}{soc}
\DeclareMathOperator{\I}{\mathcal{I}}
\DeclareMathOperator{\ch}{ch}
\DeclareMathOperator{\Char}{char}
\DeclareMathOperator{\Lie}{Lie}
\DeclareMathOperator{\Sym}{Sym}
\begin{document}
\newdir{ >}{{}*!/-5pt/@{>}}
\title[The Grothendieck ring of $G(n,r)$]{The Grothendieck ring of the
structure group of the geometric Frobenius morphism}

\author{Markus Severitt}

\begin{abstract}
The geometric Frobenius morphism on smooth varieties is an fppf-fiber bundle. 
We study representations of the structure group scheme. In particular, we describe irreducible 
representations and compute its Grothendieck ring of finite dimensional representations.  
\end{abstract}
\maketitle

\section{Introduction}
Let $k$ be a field of characteristic $p>0$. Then for all smooth $k$-varieties $X$ of dimension $n$, 
the $r$-th geometric Frobenius morphism
\[F^r:X\rightarrow X^{(r)}\]
is an fppf-fiber bundle with fibers $\mathbb{A}^n_r$, the $r$-th Frobenius kernel of the affine space 
of dimension $n$ considered as $\mathbb{G}_{a}^n$. Now denote
\[R(n,r):=k[\mathbb{A}^n_r]=k[x_1,\ldots,x_n]/(x_1^{p^r},\ldots,x_n^{p^r})\]
Let $G(n,r)$ be the automorphism group scheme of 
$R(n,r)$. Then for each $G(n,r)$-representation $V$, there is an associated canonical $X^{(r)}$-vector bundle by 
twisting the fiber bundle $F^r$ with $V$. In order to understand these bundles K-theoretically one needs to understand 
the Grothendieck ring of $G(n,r)$. The latter one is the purpose of this paper. The topic arose from a correspondence between 
Pierre Deligne and Markus Rost where Deligne suggested this setting for $n=r=1$.

We will give a description of the irreducible $G(n,r)$-representations whose classes form a $\Z$-basis of 
$K_0(G(n,r)\mathrm{-rep})$ as an abelian group. Note that $\Lie G(n,r)=\Der_k(R(n,r))$. That is, for $r=1$, 
this is the restricted Lie algebra of Cartan type Witt. In fact, the description and the involved computations we give 
generalize results of \cite{Nak92} for the simple restricted modules of $W_n=\Der_k(R(n,1))$. In order to get the description 
of irreducible representations, we need a triangular decomposition $G(n,r)=G^-G^0G^+$ with $G^0=\GL_n$.
In fact, most of the arguments involved are quite general. Hence we will give them in the abstract notion of 
\emph{triangulated groups}. Also, this notion turns out to be useful in order to describe the recursive part of our description which 
passes from $G(n,r)$ to $G(n,r+1)$. Furthermore, this notion also covers Jantzen's groups $G_rT$ and $G_rB$. Note that
the description of irreducible $G(n,1)$-representations was already given by Abrams \cite{Abr96}, \cite{Abr97}. 
Unfortunately, one part of our description only works if we exclude the case $\Char(k)=2$.

Our main goal is to describe $K_0(G(n,r)\mathrm{-rep})$ as a surjective image of $r+1$ copies of 
$K_0(\GL_n\mathrm{-rep})$ as an abelian group. This involves the de Rham complex of $R(n,r)$ over $k$ and Cartier's 
Theorem which computes its cohomology. We will also compute the kernel elements of our surjective map
\[K_0(\GL_n\mathrm{-rep})^{\oplus r+1}\rightarrow K_0(G(n,r)\mathrm{-rep})\]
and introduce a ring structure of the left hand side.

\section{Basic Properties}
Let us first fix some notions we are using. As already said, $k$ is a field of characteristic $p>0$. 
By an \emph{algebraic $k$-group $G$} we understand a group scheme $G$, 
that is a functor from commutative $k$-algebras to groups, which is representable by a finitely generated Hopf algebra. 
When we denote $g\in G$ we understand a choice of a commutative $k$-algebra $A$ and $g\in G(A)$. For the readabilty, we will 
suppress $A$. Furthermore, for $r\geq 0$, we denote by $G^{(r)}$ the \emph{$r$-th Frobenius twist} 
and by $G_r$ the kernel of the $r$-th Frobenius morphism $F^r:G\rightarrow G^{(r)}$.

Now the group scheme 
\[G(n,r)=\underline{\Aut}(R(n,r))\]
with
\[R(n,r):=k[\mathbb{A}^n_r]=k[x_1,\ldots,x_n]/(x_1^{p^r},\ldots,x_n^{p^r})\]
is defined to be
\[G(n,r)(A):=\Aut_{A\mathrm{-alg}}(R(n,r)_A)\]
with 
\[R(n,r)_A:=R(n,r)\otimes_k A=A[x_1,\ldots,x_n]/(x_1^{p^r},\ldots,x_n^{p^r})\]
Each element $g\in G(n,r)$ is determined by the images of the $x_i$. In fact, a choice $g_i\in R(n,r)_A$ for each $i=1,\ldots,n$ 
defines an element $g\in G(n,r)(A)$ by $g(x_i)=g_i$ if and only if $g_i(0)^{p^r}=0$ for all $i$ and $J_g\in \GL_n(A)$ where
\[J_g:=\left(\frac{\partial g_j}{\partial x_i}(0)\right)_{ij}\]
is the \emph{Jacobian matrix} of $g$.

As $\GL_n$ acts linearly on $\mathbb{A}^n_r$, we get 
\[G^0:=\GL_n\subset G(n,r)\] 
as a subgroup. Furthermore, 
$\mathbb{G}_{a,r}^n$, the $r$-th Frobenius kernel of $\mathbb{G}_a^n$, acts on $\mathbb{A}^n_r$ by translation. That is, we get 
\[G^-:=\mathbb{G}_{a,r}^n\subset G(n,r)\]
as a subgroup. Now denote
\begin{eqnarray*}
G^+&:=&\{g\in G(n,r)\mid\forall i:g(x_i)(0)=0,\ J_g=\id\}\\
&=&\{g\in G(n,r)\mid \forall i:g(x_i)=x_i+\sum_{I,\deg(I)\geq 2}\lambda_Ix^I\} 
\end{eqnarray*}
where $I\in\{0,\ldots,p^r-1\}^n$ is a multi-index with the usual degree $\deg(I)$ and $x^I\in R(n,r)$ is the corresponding monomial. 
Note that $G^+\cong\mathbb{A}^N$ for an $N\in\N$ as a scheme. These three subgroups provide a triangular decomposition
\[G(n,r)=G^-G^0G^+\]
That is, the multiplication $m:G^-\times G^0\times G^+\rightarrow G$ is an isomorphism of $k$-schemes.

Moreover, for all $1\leq i\leq n$, denote
\[U_i=U_i(n,r):=\{g\in G(n,r)\mid \forall j:g(x_j)(0)^{p^i}=0\}\subset G(n,r)\]
These subschemes are in fact subgroups who afford the triangular decomposition
\[U_i=G^-_iG^0G^+\]
Note that $G^-_i=\mathbb{G}_{a,i}^n$.

As we already noticed,
\[\Lie(G(n,r))=\Der_k(R(n,r))\]
the self-derivations of $R(n,r)$ by \cite{DG80}*{II\S4,2.3 Proposition}.
A canonical basis of this Lie algebra is given by the operators
\[\delta_{(i,x^I)}:=x^I\frac{\partial}{\partial x_i},\ I\in\{0,\ldots,p^r-1\}^n\]

\section{Triangulated Groups}
Now we will introduce the notion of triangulated groups and triangulated morphisms. 
It turned out to be a convenient notion in order to study the group schemes $G(n,r)$. 
Triangular decompositions are a standard tool in algebraic Lie theory. This notion is meant 
to catch some of their properties in an abstract way.
 
\begin{Def}
Let $H$ be an algebraic $k$-group. A \emph{pretriangulation} of $H$ is a collection of three subgroups $(H^-,H^0,H^+)$ 
such that the multiplication map
\[m:H^-\times H^0\times H^+\rightarrow H\]
is an isomorphism of $k$-schemes. We shortly denote this by
\[H=H^-H^0H^+\]
\end{Def}
As an example, we have 
\[G(n,r)=G^-G^0G^+\]
as well as $U_i(n,r)=G^-_iG^0G^+$.

Furthermore for each split reductive group $G$, we get for the $r$-th Frobenius kernel
\[G_r=U^-_rT_r U^+_r\]
where $T\subset G$ is a maximal torus and $U^{\pm}\subset G$ the unipotent subgroups.
\begin{Def}
Let $G=G^-G^0G^+$ and $H=H^-H^0H^+$ be pretriangulated and $f:G\rightarrow H$ a group homomorphism. 
Then $f$ is said to be \emph{triangulated}, if for all $\alpha\in\{-,0,+\}$ the restriction of 
$f$ to $G^{\alpha}$ factors through $H^{\alpha}$. 

We denote this factorization by $f^{\alpha}:G^{\alpha}\rightarrow H^{\alpha}$ and we write
\[f=f^-f^0f^+\]
\end{Def}
As an example consider a pretriangulation $H=H^-H^0H^+$. Then the $r$-th Frobenius twist of $H$ 
is pretriangulated by $H^{(r)}=(H^-)^{(r)}(H^0)^{(r)}(H^+)^{(r)}$ and the $r$-th Frobenius morphisms
\[F^r_H:H\rightarrow H^{(r)}\]
is triangulated with $(F^r_H)^{\alpha}=F^r_{H^{\alpha}}$.

The following Lemma is straightforward.
\begin{Lem}\label{prop of tr mor}
Let $f:G\rightarrow H$ be triangulated. Then the following holds:
\begin{enumerate}
 \item The kernel of $f$ is pretriangulated by 
\[\Ker(f)=\Ker(f^-)\Ker(f^0)\Ker(f^+)\]
\item The image of $f$ is pretriangulated by 
\[\im(f)=\im(f^-)\im(f^0)\im(f^+)\]
\item The closed immersion $\im(f)\hookrightarrow H$ is triangulated. 
\end{enumerate}
\end{Lem}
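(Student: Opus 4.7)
The plan is to reduce everything to the single commutative square
\begin{CD}
G^- \times G^0 \times G^+ @>{m_G}>> G \\
@V{f^-\times f^0\times f^+}VV @VV{f}V \\
H^- \times H^0 \times H^+ @>{m_H}>> H
\end{CD}
whose commutativity expresses exactly that $f$ is triangulated, and whose horizontal arrows are isomorphisms of $k$-schemes by the pretriangulation hypothesis. Via these isomorphisms $f$ is identified with the product morphism $f^-\times f^0\times f^+$, so each of the three claims becomes a statement about kernels, images, or subscheme inclusions of a product morphism.

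For (1) I would argue that, since $m_H$ is an isomorphism of schemes, an element $h^-h^0h^+$ is trivial in $H$ if and only if each $h^\alpha$ is trivial. Transporting $\Ker f$ through $m_G^{-1}$ therefore identifies it with $\Ker f^- \times \Ker f^0 \times \Ker f^+$, and restricting $m_G$ to this product gives the asserted pretriangulation.

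For (2) I would use the scheme-theoretic identity $\im(f^-\times f^0\times f^+) = \im f^- \times \im f^0 \times \im f^+$, which holds over a field because the defining ideal of the image of a product of morphisms is the sum of the pullbacks of the defining ideals of the factor images. Combined with the isomorphism $m_H$, this yields $\im f = m_H(\im f^-\times\im f^0\times\im f^+)$, and restricting $m_H$ to this closed subscheme is an isomorphism onto $\im f$, giving its pretriangulation.

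Statement (3) is then immediate: the inclusion $\im f \hookrightarrow H$, restricted to $\im f^\alpha$, is the composition $\im f^\alpha \hookrightarrow H^\alpha \hookrightarrow H$, which factors through $H^\alpha$ by construction. There is no genuine obstacle; the only mild technicality is the scheme-theoretic behavior of kernels and images of product morphisms, which is what makes the lemma work at the level of (group) schemes rather than merely on $A$-points, and accounts for the author's characterization of the proof as \emph{straightforward}.
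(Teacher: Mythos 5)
Your argument is correct and is essentially the natural proof the paper leaves to the reader with the word ``straightforward''; the commutative square identifying $f$ with the product morphism $f^-\times f^0\times f^+$ via the scheme isomorphisms $m_G$, $m_H$ is the right reduction, and the remaining facts (kernels and scheme-theoretic images of product morphisms of affine $k$-schemes decompose as products, the latter because tensoring over a field preserves injectivity so the defining ideal of the image is the sum of the pulled-back ideals) are all sound. Part~(3) does indeed follow immediately from the inclusions $\im f^\alpha\subseteq H^\alpha$.
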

Note that under the isomorphism $G/\Ker(F)\cong \im(f)$, we also a obtain a pretriangulation
\[G/\Ker(f)=G^-/\Ker(f^-)G^0/\Ker(f^0)G^+/\Ker(f^+)\]

As an example we take a pretriangulation $H=H^-H^0H^+$ and the $r$-th Frobenius morphism $F^r_H:H\rightarrow H^{(r)}$. 
Then we get that the $r$-th Frobenius kernel of $H$ is pretriangulated by
\[H_r=H_r^-H_r^0H_r^+\]

Our next aim is to describe irreducible representation of a pretriangulated group $H$ in terms of $H^0$. 
In order to do that, we need the following.
\begin{Def}
A pretriangulation $H=H^-H^0H^+$ is called a \emph{triangulation} if the following statements hold:
\begin{enumerate}
 \item The following products are semi direct by conjugation
\[B^-:=H^-\rtimes H^0\ \mbox{and}\ B^+:=H^0\ltimes H^+\]
\item $H^-$ and $H^+$ are unipotent.
\item $H^-$ is finite.
\end{enumerate}
\end{Def}
Note that this definition is not symmetric. 
As an example, the pretriangulation $G(n,r)=G^-G^0G^+$ is also a triangulation.

For a triangulation $H=H^-H^0H^+$, we denote the group homomorphisms
\begin{enumerate}
 \item the projections $\pi^{\pm}:B^{\pm}\rightarrow H^0$
 \item the inclusions $j^{\pm}:B^{\pm}\hookrightarrow H$
\end{enumerate}
and the functor
\[\I:=(j^+)_{\ast}(\pi^+)^{\ast}:H^0\mathrm{-rep}\rightarrow H\mathrm{-rep}\]
between categories of finite dimensional representations. Here 
\[(\pi^+)^{\ast}:H^0\mathrm{-rep}\rightarrow B^+\mathrm{-rep}\]
and 
\[(j^+)_{\ast}=\Mor_{B^+}(H,-):B^+\mathrm{-rep}\rightarrow H\mathrm{-rep}\]
is induction. That is
\[(j^+)_{\ast}(V)=\{f\in\Mor(H,V)\mid f(hg)=hf(g)\ \forall g\in H,\ \forall h\in B^+\}\]
and the $H$-action is induced by right translation on $H$. The functor $(j^+)_{\ast}$ preserves finiteness as
\[\Mor_{B^+}(H,-)\cong\Mor(H^-,-)\]
by using the decomposition $H=B^+ H^-$ and the finiteness of $H^-$. In order to express the $H$-action, let us 
denote for $h\in H$ the decomposition
\[h=h_+h_0h_-\]
according to the decomposition $H=B^+H^-=H^+H^0H^-$. The following Lemma is straightforward.
\begin{Lem}\label{desc of I-functor}
Let $H=H^-H^0H^+$ be a triangulation and $V$ an $H^0$-repre\-sen\-ta\-tion.
\begin{enumerate}
 \item Under the isomorphism
\[\I(V)\cong\Mor(H^-,V)\]
the $H$-action translates as follows: For all $h\in H$, $a\in H^-$, and $f:H^-\rightarrow V$
\[(hf)(a)=(ah)_0f((ah)_-)\]
\item If we restrict to $B^-$, we get 
\[(j^-)^{\ast}\I(V)\cong k[H^-]\otimes_k V\]
Here $H^-$ acts on $k[H^-]$ by the right regular 
representation and trivial on $V$ and $H^0$ acts on $k[H^-]$ by conjugation and as given on $V$. 
\end{enumerate}
\end{Lem}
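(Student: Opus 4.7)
The plan is to construct the isomorphism $\I(V)\cong\Mor(H^-,V)$ explicitly from the decomposition $H=B^+H^-$, transport the right-translation $H$-action through this identification for part (1), and then specialize to $B^-$ for part (2).

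For part (1), I would first observe that a $B^+$-equivariant map $f\colon H\to V$ is determined by its restriction $\tilde f:=f|_{H^-}$, because every $h\in H$ factors uniquely as $h=b\,a$ with $b\in B^+$ and $a\in H^-$; the inverse sends $\tilde f$ to the function $f(ba):=\pi^+(b)\cdot\tilde f(a)$, which is $B^+$-equivariant by construction (using that $B^+$ acts on $(\pi^+)^{\ast}V$ through $\pi^+$). The $H$-action induced by right translation on $H$ reads $(hf)(a)=f(ah)$. Applying the triangulation $H=H^+H^0H^-$ to $ah$ yields $ah=(ah)_+(ah)_0(ah)_-$, with $(ah)_+(ah)_0\in B^+$ and $\pi^+\bigl((ah)_+(ah)_0\bigr)=(ah)_0$, so that
\[(hf)(a)=\pi^+\bigl((ah)_+(ah)_0\bigr)\,\tilde f\bigl((ah)_-\bigr)=(ah)_0\,\tilde f\bigl((ah)_-\bigr),\]
which is the formula claimed.

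For part (2), the canonical identification $\Mor(H^-,V)\cong k[H^-]\otimes_k V$, valid because $H^-$ is finite and $V$ finite-dimensional, reduces the assertion to checking the formula from (1) on the two factors $H^-$ and $H^0$ of $B^-=H^-\rtimes H^0$ separately. If $h\in H^-$, then $ah\in H^-$, so $(ah)_0=1$ and $(ah)_-=ah$; this gives $(h\tilde f)(a)=\tilde f(ah)$, which under the tensor decomposition is the right regular action on $k[H^-]$ with trivial action on $V$. If $h\in H^0$, the normality of $H^-$ in the semidirect product $B^-$ yields $ah=h\cdot(h^{-1}ah)$ with $h^{-1}ah\in H^-$, so $(ah)_0=h$ and $(ah)_-=h^{-1}ah$; this gives $(h\tilde f)(a)=h\cdot\tilde f(h^{-1}ah)$, which is exactly conjugation on $k[H^-]$ tensored with the given $H^0$-action on $V$.

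The main obstacle is pure bookkeeping: keeping the factor order of $ah=(ah)_+(ah)_0(ah)_-$ straight and remembering that the $B^+$-action on $V$ is pulled back along $\pi^+$, which is precisely the ingredient that produces the $(ah)_0$ scalar. As advertised by the statement, no further technical input is required beyond unwinding the definitions of induction and of the triangulation.
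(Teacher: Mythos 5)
The paper declares this lemma ``straightforward'' and supplies no proof, so there is no written argument to compare against; your proof fills the gap exactly along the intended lines, unwinding the definition of $(j^+)_\ast(\pi^+)^\ast V$ via the unique factorization $H=B^+H^-$, checking that right translation yields $(hf)(a)=f(ah)=\pi^+\bigl((ah)_+(ah)_0\bigr)f((ah)_-)=(ah)_0f((ah)_-)$, and then specializing $h\in H^-$ and $h\in H^0$ for the second part. This is correct and is the same approach the paper alludes to when it remarks that $\Mor_{B^+}(H,-)\cong\Mor(H^-,-)$ via $H=B^+H^-$.
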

In particular, we get
\[\I(V)^{H^-}\cong (k[H^-]\otimes_k V)^{H^-}=k[H^-]^{H^-}\otimes_k V\cong V\]
as $H^0$-representations.
\begin{Ex}\label{Gnr-action on I}
Let us consider the group scheme $G(n,r)$ and the triangulation $G(n,r)=G^-G^0G^+$. 
Recall that $G^-=(\mathbb{G}_{a,r})^n$ and $G^0=\GL_n$. Then for all $g\in G(n,r)$, we get 
$g_-=g(0)\in (\mathbb{G}_{a,r})^n$ and $g_0=J_g\in \GL_n$.
As $R(n,r)=k[(\mathbb{G}_{a,r})^n]$, we get for each 
$\GL_n$-representation $V$ that
\[\I(V)\cong R(n,r)\otimes_k V\]
as $k$-vector spaces.
The $G(n,r)$-action reads as follows:
\[g(f\otimes v)=\left( \frac{\partial g(x_j)}{\partial x_i}\right )_{ij}(g(f)\otimes v)\]
for all $g\in G(n,r)$, $f\in R(n,r)$, and $v\in V$. Note that this uses the fact that for $g\in G(n,r)(A)$ we get 
$\left( \frac{\partial g(x_j)}{\partial x_i}\right )_{ij}\in \GL_n(R(n,r)_A)$ which acts on $R(n,r)_A\otimes_k V$. 

In particular, take $V=k$ with the trivial $\GL_n$-action. Then we get $\I(k)\cong R(n,r)$ together with the standard action of 
$G(n,r)$ on $R(n,r)$. That is, the standard action can be recovered from the triangulation.
\end{Ex}
Now the functor $\I$ gives rise to the following description of irreducible $H$-representations.
\begin{Prop}\label{prop: param of irred H rep for triang}
Let $H=H^-H^0H^+$ be a triangulation. Then the maps
\[\soc\I:\{\mathrm{irred.}\ H^0\mathrm{-rep}\}/_{\cong}\longrightarrow \{\mathrm{irred.}\ H\mathrm{-rep}\}/_{\cong}\]
and
\[(-)^{H^-}:\{\mathrm{irred.}\ H\mathrm{-rep}\}/_{\cong}\longrightarrow\{\mathrm{irred.}\ H^0\mathrm{-rep}\}/_{\cong}\]
are well-defined and inverse to each other.
\end{Prop}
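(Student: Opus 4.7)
The plan is to combine three ingredients: the identification $\I(V)^{H^-}\cong V$ from Lemma~\ref{desc of I-functor}; the non-vanishing of $H^{\pm}$-invariants (respectively coinvariants) on nonzero finite-dimensional representations, coming from the unipotence of $H^{\pm}$; and Frobenius reciprocity
\[\Hom_H(W,\I(V))=\Hom_{B^+}(W,(\pi^+)^{\ast}V)=\Hom_{H^0}(W_{H^+},V),\]
where the last equality uses that $(\pi^+)^{\ast}V$ has trivial $H^+$-action.

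First I would show that $\soc\I(V)$ is irreducible whenever $V$ is an irreducible $H^0$-representation. For any nonzero $H$-subrepresentation $W\subseteq\I(V)$, the invariants $W^{H^-}$ are nonzero by unipotence of $H^-$ and are $H^0$-stable since $H^0$ normalizes $H^-$ inside $B^-=H^-\rtimes H^0$. Lemma~\ref{desc of I-functor} identifies $W^{H^-}$ with a nonzero $H^0$-subrepresentation of $\I(V)^{H^-}\cong V$, forcing $W^{H^-}=V$ by irreducibility of $V$. Thus every nonzero $H$-subrepresentation of $\I(V)$ contains $V$, hence contains the $H$-submodule $L$ generated by $V$; in particular $L$ has no proper nonzero $H$-subrepresentation, so $L$ is irreducible and equals $\soc\I(V)$. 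Since $V\subseteq L$ and $V$ consists of $H^-$-invariants, $(\soc\I(V))^{H^-}=V$. This establishes both the well-definedness of $\soc\I$ on irreducibles and the identity $(-)^{H^-}\circ\soc\I=\id$.

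For the converse, given an irreducible $H$-representation $W$, I would construct an embedding $W\hookrightarrow\I(V)$ for some irreducible $H^0$-representation $V$. The coinvariants $W_{H^+}$ are nonzero: by unipotence of $H^+$, $(W^{\ast})^{H^+}\neq 0$, and finite-dimensional duality gives $(W_{H^+})^{\ast}\cong(W^{\ast})^{H^+}$. Pick any irreducible $H^0$-quotient $V$ of $W_{H^+}$. Frobenius reciprocity then produces a nonzero $H$-map $W\to\I(V)$, necessarily injective by irreducibility of $W$. The previous step identifies the image with $\soc\I(V)$ and forces $W^{H^-}=V$, which is therefore irreducible. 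This shows $(-)^{H^-}$ is well-defined on irreducibles and that $\soc\I\circ(-)^{H^-}=\id$.

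I expect the main obstacle to be careful bookkeeping rather than depth: one must use the semidirect structure of $B^-$ from the triangulation axioms to ensure $H^0$-stability of $W^{H^-}$, apply Frobenius reciprocity in the correct direction, and convert the unipotence of $H^+$ into non-vanishing of $W_{H^+}$ via finite-dimensional duality. The finiteness of $H^-$ plays only a background role, guaranteeing that $\I$ preserves finite dimensionality so that the whole argument stays in the category used for Proposition~\ref{prop: param of irred H rep for triang}.
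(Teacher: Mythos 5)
Your proposal is correct and follows essentially the same route as the paper: irreducibility of $\soc\I(V)$ via unipotence of $H^-$ together with $\I(V)^{H^-}\cong V$, and the converse direction via non-vanishing of $H^+$-(co)invariants, Frobenius reciprocity for $(j^+)_{\ast}$, and duality. The only cosmetic difference is that the paper works with $(W^{\vee})^{H^+}$ and dualizes, whereas you work directly with the coinvariants $W_{H^+}$; these are equivalent formulations of the same step.
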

\begin{proof}
Let $V$ be an irreducible $H^0$-representation. As $H^-$ is unipotent, a standard argument 
by taking $H^-$-invariants and using $\I(V)^{H^-}\cong V$ shows that 
$\soc\I(V)$ is an irreducible $H$-representation. Also, $\soc\I(V)^{H^-}\cong V$ follows. Now let $W$ be an 
irreducible $H$-representation. As $H^+$ is unipotent $(W^{\vee})^{H^+}\neq 0$. Thus there is an irreducible 
$H^0$-representation $V\neq 0$ such that $V^{\vee}\subset (W^{\vee})^{H^+}$. Hence by dualization
\[0\neq \Hom_{B^+}((j^+)^{\ast}W,(\pi^{+})^{\ast}V)\cong\Hom_H(W,\I V)\]
This shows $W\cong\soc\I(V)$ and finishes the proof. 
\end{proof}
Finally, we extend the notion of triangulations as follows.
\begin{Def}
A triangulation $H=H^-H^0H^+$ is called an \emph{$r$-triangulation} if $H^-$ is of height $\leq r$. That is, $H^-$ equals its $r$-th Frobenius kernel:
\[H^-=(H^-)_r\]
\end{Def}
Note that an $r$-triangulation is also an $r+1$-triangulation. As an example, the triangulation $G(n,r)=G^-G^0G^+$ 
is also an $r$-triangulation as $G^-=\mathbb{G}_{a,r}^n$. Moreover $U_i(n,r)=G^-_iG^0G^+$ is an $i$-triangulation.

Now for an $r$-triangulation $H=H^-H^0H^+$, the $r$-th 
Frobenius factors as
\[F^r_H:H\rightarrow (B^+)^{(r)}\subset H^{(r)}\]
which provides the group homomorphism
\[P_r:=\pi^+\circ F^r_H:H\rightarrow (H^0)^{(r)}\]
In the example $G(n,r)$, this computes as
\[P_r(g)=F^r_{\GL_n}(J_g)=\left(\left( \frac{\partial g(x_j)}{\partial x_i}(0)\right)^{p^r}\right)_{ij}\]
If we consider $(H^0)^{(r)}$ triangulated with trivial $\pm$-factors, $P_r$ is triangulated with $P_r^{\pm}$ 
trivial and $P_r^0=F^r_{H^{0}}$. Thus
\[\Ker(P_r)=H^-H^0_rH^+\]
Moreover, $P_r$ has the following very useful property.
\begin{Lem}
Let $H=H^-H^0H^+$ be an $r$-triangulation such that $H^0$ is reduced. Then $P_r$ induces an isomorphism
\[H/\Ker(P_r)\cong (H^0)^{(r)}\]
\end{Lem}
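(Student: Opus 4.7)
The plan is to show that $P_r$ is surjective as a morphism of group schemes; once this is done, the isomorphism $H/\Ker(P_r) \cong (H^0)^{(r)}$ follows from the standard first isomorphism theorem for finite type group schemes over a field, which identifies $H/\Ker(P_r)$ with $\im(P_r)$.

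For surjectivity, I would first apply Lemma \ref{prop of tr mor}(2) to $P_r$, regarded as triangulated into $(H^0)^{(r)}$ equipped with the trivial $\pm$-factors. Since $P_r^{\pm}$ is trivial and $P_r^0 = F^r_{H^0}$, this yields
\[\im(P_r) \;=\; \im(P_r^-)\,\im(P_r^0)\,\im(P_r^+) \;=\; \im(F^r_{H^0}).\]
So the claim reduces to showing that $F^r_{H^0}: H^0 \to (H^0)^{(r)}$ is surjective as a morphism of group schemes, i.e., that its scheme-theoretic image equals the target.

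To verify this, I would combine two observations. First, the relative Frobenius $F^r_{H^0}$ is surjective on underlying topological spaces. Indeed, the composition $H^0 \xrightarrow{F^r_{H^0}} (H^0)^{(r)} \to H^0$ is the absolute Frobenius, which is a homeomorphism on topological spaces; and the second arrow, being the base change of Frobenius on the single point $\Spec k$, is likewise a homeomorphism on underlying spaces. Second, since $H^0$ is reduced, so is $(H^0)^{(r)}$, whose coordinate ring coincides with $k[H^0]$ as a ring (only the $k$-algebra structure is twisted by $F^r_k$). Now the scheme-theoretic image of $F^r_{H^0}$ is cut out in $(H^0)^{(r)}$ by the kernel of $F^r_{H^0}{}^\ast: k[(H^0)^{(r)}] \to k[H^0]$; as a subring of the reduced ring $k[H^0]$, the image ring is reduced, so the scheme-theoretic image is a reduced closed subscheme of the reduced scheme $(H^0)^{(r)}$ with the same underlying topological space. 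Since a reduced closed subscheme is determined by its support, the two agree.

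The main subtlety is the care needed in handling the reducedness of $(H^0)^{(r)}$ and of the scheme-theoretic image over a possibly non-perfect field $k$; neither is deep, but it is essential that the twist by $F^r_k$ does not destroy reducedness, which is why the argument uses the concrete description of $(H^0)^{(r)}$ as a ring-theoretic twist of $H^0$ rather than, say, an abstract flatness argument.
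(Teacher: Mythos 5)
Your overall strategy matches the paper's exactly: both reduce, via the triangulated structure of $P_r$ and Lemma \ref{prop of tr mor}, to the single claim that $F^r_{H^0}\colon H^0\to (H^0)^{(r)}$ identifies $H^0/(H^0)_r$ with $(H^0)^{(r)}$. The paper disposes of this last step by citing \cite{Jan03}*{I.9.5}, whereas you attempt a direct proof, and this is where the gap lies.

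The problematic assertion is that $(H^0)^{(r)}$ is reduced because its ``coordinate ring coincides with $k[H^0]$ as a ring (only the $k$-algebra structure is twisted by $F^r_k$).'' This conflates two different constructions. The Frobenius twist $G^{(r)}$ is the \emph{base change} $G\times_{\Spec k,F^r}\Spec k$, whose coordinate ring is $k[G]\otimes_{k,F^r}k$. This is not $k[G]$ with a relabelled $k$-action: when one writes $k[G]$ as a quotient of a polynomial ring, the relations have their coefficients raised to the $p^r$-th power in the twist, so the ring genuinely changes. (The construction you describe, keeping the ring and recomposing the structure map $\Spec k[G]\to\Spec k$ with Frobenius, is a different functor and is not what $(-)^{(r)}$ means here.)

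Over a non-perfect field this matters. Take $k=\mathbb{F}_p(t)$ and $H^0=V(y^p-tx^p)\subset\mathbb{G}_a^2$. Then $k[H^0]=k[x,y]/(y^p-tx^p)$ is a domain, hence reduced, but
\[
k[(H^0)^{(1)}]\;=\;k[H^0]\otimes_{k,F}k\;\cong\;k[u,v]/(v^p-t^pu^p)\;=\;k[u,v]/\bigl((v-tu)^p\bigr),
\]
which is not reduced. In this example $F\colon H^0\to(H^0)^{(1)}$ has scheme-theoretic image $\Spec k[u,v]/(v-tu)\subsetneq (H^0)^{(1)}$, so the desired surjectivity actually fails. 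Thus the step ``reduced closed subscheme of a reduced scheme with the same support, hence equal'' is not available: the ambient $(H^0)^{(r)}$ need not be reduced, and your argument would incorrectly prove the lemma over arbitrary $k$. The argument does go through when $k$ is perfect (then $F^r_k$ is an isomorphism and $(H^0)^{(r)}\cong H^0$), or more generally when $H^0$ is geometrically reduced; the paper only applies the lemma to $H^0=\GL_n$, which is smooth, so its uses are unaffected, but your direct proof needs the stronger hypothesis made explicit and justified rather than the incorrect claim that the twist preserves the ring.
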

\begin{proof}
This follows from the triangulated structure of $P_r$, Lemma \ref{prop of tr mor} and the fact that $F^r_{H^0}$ 
induces an isomorphism
\[H^0/(H^0)_r\cong (H^0)^{(r)}\]
if $H^0$ is reduced \cite{Jan03}*{I.9.5}.
\end{proof}
As an immediate consequence, we get that for $H^0$ reduced, the functor
\[P_r^{\ast}:(H^0)^{(r)}\mathrm{-rep}\longrightarrow H\mathrm{-rep}\]
preserves irreducible representations.
\begin{Not}
For any algebraic $k$-group $G$ and a $G^{(r)}$-representation $W$, we denote the $r$-th Frobenius twist by
\[W^{[r]}:=(F^r_G)^{\ast}W\]
\end{Not}
Then we get the following computational rule.
\begin{Lem}\label{Tensor Id for I}
Let $H=H^-H^0H^+$ be an $r$-triangulation, $V$ an $H^0$-re\-pre\-sen\-ta\-tion, and $W$ an $(H^0)^{(r)}$-representation. Then
\[\I(V\otimes_k W^{[r]})\cong \I(V)\otimes_k P_r^{\ast}W\]
as $H$-representations.
\end{Lem}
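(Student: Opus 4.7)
The plan is to reduce the asserted isomorphism to an identity of group homomorphisms $B^+ \to (H^0)^{(r)}$ by invoking the tensor identity (projection formula) for the induction functor $(j^+)_{\ast}$. Recall that induction from a closed subgroup of an algebraic group satisfies the standard tensor identity: for any $B^+$-representation $N$ and any $H$-representation $M$,
\[(j^+)_{\ast}\bigl(N \otimes_k (j^+)^{\ast}M\bigr) \cong (j^+)_{\ast}(N) \otimes_k M.\]
Applying this with $N=(\pi^+)^{\ast}V$ and $M=P_r^{\ast}W$, and observing that $(\pi^+)^{\ast}$ is compatible with tensor products, reduces the claim of the lemma to exhibiting an isomorphism of $B^+$-representations
\[(\pi^+)^{\ast}W^{[r]} \cong (j^+)^{\ast}P_r^{\ast}W.\]

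Both sides are pullbacks of the $(H^0)^{(r)}$-representation $W$ along group homomorphisms $B^+ \to (H^0)^{(r)}$, so the claim reduces further to verifying the equality
\[F^r_{H^0} \circ \pi^+ = P_r \circ j^+\]
of morphisms $B^+ \to (H^0)^{(r)}$. This is a formal consequence of the definition $P_r = \pi^+\circ F^r_H$ (where $F^r_H$ factors through $(B^+)^{(r)}\hookrightarrow H^{(r)}$ by the $r$-triangulation hypothesis, so that the outer $\pi^+$ is really $(\pi^+)^{(r)}$) together with naturality of Frobenius applied twice. Concretely, naturality of Frobenius for $j^+$ shows that $F^r_H \circ j^+$ agrees with $F^r_{B^+}$ as a map $B^+ \to (B^+)^{(r)}$; composing with $(\pi^+)^{(r)}$ and then using naturality of Frobenius for $\pi^+$ gives
\[P_r \circ j^+ = (\pi^+)^{(r)} \circ F^r_{B^+} = F^r_{H^0} \circ \pi^+,\]
as required.

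The only non-formal input to this argument is the tensor identity for the induction $(j^+)_{\ast}$, which is a standard property of induction from a closed subgroup inclusion of algebraic $k$-groups. Once that is invoked, the remainder is a purely formal diagram chase in the naturality square for Frobenius, and I do not anticipate any substantial obstacle.
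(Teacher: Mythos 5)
Your proof is correct and follows essentially the same route as the paper's: invoke the Tensor Identity for $(j^+)_{\ast}$, then reduce to the compatibility of $P_r^{\ast}W$ and $W^{[r]}$ on $B^+$. The paper compresses the final reduction to the single fact $P_r^{\ast}W|_{H^0}=W^{[r]}$ (with the triviality of $P_r$ on $H^+$ implicit from the triangulated structure of $P_r$), whereas you make the full $B^+$-level identity $F^r_{H^0}\circ\pi^+ = P_r\circ j^+$ explicit via naturality of Frobenius — a faithful and complete unpacking of the same argument.
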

\begin{proof}
The claim follows from the Tensor Identity \cite{Jan03}*{I.3.6} for induction and the fact that 
$P_r^{\ast}W|_{H^0}=W^{[r]}$.
\end{proof}
This provides the following Proposition which reads as and uses Steinberg's Tensor Product Theorem 
\cite{Jan03}*{II.3.16,II.3.17}. For this, we need the following notations where for split reductive groups we follow \cite{Jan03}. 
\begin{Not}
Let $H=H^-H^0H^+$ be a triangulation such that $H^0$ is split reductive. Denote by $T\subset H^0$ 
a split maximal torus and by $X(T)_+$ the \emph{dominant weights}. Furthermore denote by $S$ the \emph{simple roots} 
and for $r\geq 1$
\[X_r(T):=\{\lambda\in X(T)\mid \forall \alpha\in S: 0\leq \langle \lambda,\alpha^{\vee}\rangle<p^r\}\] 
For $\lambda\in X(T)_+$, we denote the associated irreducible $H^0$-representation by $L(\lambda)$. 
Then, according to Proposition \ref{prop: param of irred H rep for triang}, we denote the associated irreducible 
$H$-representation by
\[L(\lambda,H):=\soc\I(L(\lambda))\] 
\end{Not}
Note that for an $r$-triangulation $H=H^-H^0H^+$ with $H^0$ split reductive, we get for all $\lambda\in X(T)_+$
\[L(p^r\lambda,H)\cong P_r^{\ast}L(\lambda)\]
as $P_r^{\ast}$ preserves irreducible representations and
\[(P_r^{\ast}L(\lambda))^{H^-}=P_r^{\ast}L(\lambda)|_{H^0}=L(\lambda)^{[r]}\cong L(p^r\lambda)\] 
\begin{Prop}\label{analogue of Steinbergs TPT}
Let $H=H^-H^0H^+$ be an $r$-triangulation such that $H^0$ is split reductive, $\lambda\in X_r(T)$ and $\mu\in X(T)_+$. 
Then
\[L(\lambda+p^r\mu,H)\cong L(\lambda,H)\otimes_k P_r^{\ast}L(\mu)\cong L(\lambda,H)\otimes_k L(p^r\mu,H)\]
\end{Prop}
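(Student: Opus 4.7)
The second isomorphism in the statement is immediate from the observation noted just before the Proposition that $L(p^r\mu,H)\cong P_r^{\ast}L(\mu)$, so the task is to establish the first isomorphism. The starting point is Lemma~\ref{Tensor Id for I} applied to $V=L(\lambda)$ and $W=L(\mu)$, yielding
\[\I(L(\lambda)\otimes_k L(\mu)^{[r]})\cong \I(L(\lambda))\otimes_k P_r^{\ast}L(\mu).\]
The classical Steinberg tensor product theorem for the split reductive group $H^0$ gives $L(\lambda)\otimes_k L(\mu)^{[r]}\cong L(\lambda+p^r\mu)$ as $H^0$-representations, since $\lambda\in X_r(T)$, $\mu\in X(T)_+$, and $L(\mu)^{[r]}|_{H^0}\cong L(p^r\mu)$. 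Substituting produces an isomorphism $\phi\colon \I(L(\lambda+p^r\mu))\xrightarrow{\sim}\I(L(\lambda))\otimes_k P_r^{\ast}L(\mu)$ of $H$-representations.

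The main ingredient is the following \emph{generation principle}: if $M$ is an $H$-representation generated as an $H$-module by a $k$-subspace $V\subseteq M$, and $N$ is any $H$-representation, then $M\otimes_k N$ is generated as an $H$-module by $V\otimes_k N$. This follows at once from the identity $h\cdot(V\otimes N)=hV\otimes hN=hV\otimes N$, which gives $\sum_h h\cdot(V\otimes N)=(\sum_h hV)\otimes N=M\otimes N$. I apply this with $M=L(\lambda,H)\subseteq \I(L(\lambda))$ and $V=L(\lambda,H)^{H^-}\cong L(\lambda)$: the latter is non-zero because $H^-$ is unipotent, and it generates $L(\lambda,H)$ because $L(\lambda,H)$ is simple. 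Consequently $L(\lambda,H)\otimes_k P_r^{\ast}L(\mu)$ equals the $H$-subrepresentation of $\I(L(\lambda))\otimes_k P_r^{\ast}L(\mu)$ generated by $L(\lambda)\otimes_k P_r^{\ast}L(\mu)$.

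It remains to identify this generator with $L(\lambda+p^r\mu)\subseteq\I(L(\lambda+p^r\mu))$ under $\phi^{-1}$. Since $H^-\subset\Ker(P_r)$ acts trivially on $P_r^{\ast}L(\mu)$, the $H^-$-invariants of $\I(L(\lambda))\otimes_k P_r^{\ast}L(\mu)$ equal $\I(L(\lambda))^{H^-}\otimes_k P_r^{\ast}L(\mu)=L(\lambda)\otimes_k P_r^{\ast}L(\mu)$; because $\phi$ is an isomorphism of $H$-representations, it matches these with $\I(L(\lambda+p^r\mu))^{H^-}=L(\lambda+p^r\mu)$. Applying the generation principle once more, now to $L(\lambda+p^r\mu,H)\subseteq \I(L(\lambda+p^r\mu))$ (simple, so generated by its non-zero $H^-$-invariants $L(\lambda+p^r\mu)$), I conclude that $\phi^{-1}(L(\lambda,H)\otimes_k P_r^{\ast}L(\mu))=L(\lambda+p^r\mu,H)$, which gives the required isomorphism. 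The main delicate point is the bookkeeping that matches the subspaces under $\phi$, but this is forced by naturality of $H^-$-invariants under $H$-equivariant maps.
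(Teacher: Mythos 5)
Your proof is correct and follows the same route as the paper: apply Steinberg's Tensor Product Theorem and Lemma~\ref{Tensor Id for I} to obtain the $H$-equivariant isomorphism $\I(L(\lambda+p^r\mu))\cong\I(L(\lambda))\otimes_k P_r^{\ast}L(\mu)$, then pass to socles using that $H^-$ acts trivially on $P_r^{\ast}L(\mu)$. Where the paper simply says "the result follows by taking socles," you spell out the needed step via the generation principle for $M\otimes_k N$ and the matching of $H^-$-invariants under $\phi$, which is a faithful and slightly more detailed rendering of the same argument.
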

\begin{proof}
By Steinberg's Tensor Product Theorem
\[L(\lambda+p^r\mu)\cong L(\lambda)\otimes_k L(\mu)^{[r]}\]
Then the previous Lemma provides
\[\I (L(\lambda+p^r\mu))\cong \I (L(\lambda))\otimes_k P_r^{\ast}L(\mu)\]
As $P_r^{\ast}L(\mu)=(P_r^{\ast}L(\mu))^{H^-}$, the result follows by taking socles.
\end{proof}
Note that examples of such $r$-triangulations with reductive $H^0$ are given by Jantzen`s groups $G_rT$ and $G_rB$. 
The Proposition just generalizes results which are already known for these groups.

\section{Transfer Homomorphisms}
\label{section:transfer homs}
In order to prepare the description of the irreducible $G(n,r)$-re\-presenta\-tions, we will introduce several 
\emph{transfer homomorphisms}. They will be  between the $G(n,r)$, $U_i(n,r)$, and $G(n,r)^0\cong\GL_n$ and their Frobenius twists respectively. 

In the previous section, we already introduced the morphism
\[P_r:G(n,r)\rightarrow (\GL_n)^{(r)}\]
arising from the $r$-triangulation $G(n,r)=G^-G^0G^+$. We also saw that for all $\lambda\in X(T)_+$ we have
\[P_r^{\ast}L(\lambda)\cong L(p^r\lambda,G(n,r))\] 

Now, for $r\geq2$ we also introduce a transfer morphism
\[T_r:G(n,r)\rightarrow G(n,r-1)^{(1)}\]
as follows: Let $S\subset R(n,r)$ be the subalgebra generated by $x_1^{p^{r-1}},\ldots,x_n^{p^{r-1}}$. Now let 
$g\in G(n,r)=\underline{\Aut}(R(n,r))$. Then $S$ is invariant under $g$. Thus we get an induced
automorphism on
\[R(n,r)\otimes_{S,(-)^p}k \cong R(n,r-1)\otimes_{k,(-)^p}k=R(n,r-1)^{(1)}\]
which defines $T_r(g)$. Note that if $F_a:R(n,r)\rightarrow R(n,r)$ denotes the 
\emph{arithmetic Frobenius}, then
\[T_r(g)(x_i\otimes 1)=g(x_i)\otimes 1=(F_a(g(x_i)))(x_i\otimes 1)\]
Now $T_r$ is triangulated as $(T_r)^-$ is just the first Frobenius morphism 
\[F^1:\mathbb{G}_{a,r}^n\rightarrow (\mathbb{G}_{a,r-1}^n)^{(1)}\]
and $(T_r)^0$ is also the first Frobenius morphism
\[F^1:\GL_n\rightarrow (\GL_n)^{(1)}\]
Note that $(T_r)^+$ is not just the first Frobenius as we set $x_i^{p^{r-1}}=0$ for all $i=1,\ldots,n$.
\begin{Lem}\label{Tr surjective}
For all $r\geq 2$, the morphism $T_r:G(n,r)\rightarrow G(n,r-1)^{(1)}$ induces an isomorphism
\[G(n,r)/\Ker(T_r)\xrightarrow{\cong} G(n,r-1)^{(1)}\]
\end{Lem}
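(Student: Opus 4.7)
The plan is to show that $T_r$ is surjective by working component-by-component. Since $T_r$ was shown above to be triangulated with $(T_r)^-=F^1_{\mathbb{G}_{a,r}^n}$ and $(T_r)^0=F^1_{\GL_n}$, Lemma~\ref{prop of tr mor}(2) reduces the problem to proving that each component $(T_r)^{\alpha}$ surjects onto the corresponding factor of the target triangulation $G(n,r-1)^{(1)}=(\mathbb{G}_{a,r-1}^n)^{(1)}(\GL_n)^{(1)}(G^+(n,r-1))^{(1)}$. Once that is established, the isomorphism $G(n,r)/\Ker(T_r)\cong G(n,r-1)^{(1)}$ follows from the first isomorphism theorem for algebraic groups.

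For the $-$ and $0$ components I rely on standard properties of the Frobenius: $F^1:\mathbb{G}_{a,r}^n\to(\mathbb{G}_{a,r}^n)^{(1)}$ has kernel $\mathbb{G}_{a,1}^n$ and thus surjects onto $(\mathbb{G}_{a,r-1}^n)^{(1)}$, while $F^1_{\GL_n}$ is surjective by smoothness of $\GL_n$.

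The heart of the argument is the surjectivity of $(T_r)^+:G^+(n,r)\to(G^+(n,r-1))^{(1)}$. My plan is to compute it in coordinates: for $g\in G^+(n,r)$ with $g(x_i)=x_i+\sum_{\deg(I)\geq 2}\lambda_{I,i}x^I$, decompose each index as $I=I'+p^{r-1}I''$ with $I'_j<p^{r-1}$ and $I''_j<p$, so that $x^I=x^{I'}y^{I''}$ with $y_j:=x_j^{p^{r-1}}\in S$. In $R(n,r)\otimes_{S,(-)^p}k$ every summand with $I''\neq 0$ vanishes because $(y^{I''})^p=0$, and the surviving coefficients acquire a Frobenius twist via the identification with $R(n,r-1)^{(1)}$. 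Since $G^+(n,r)$ and $(G^+(n,r-1))^{(1)}$ are both isomorphic to affine spaces, this translates on coordinate rings into a homomorphism of polynomial rings sending each generator to the $p$-th power of a distinct generator---manifestly injective---so $(T_r)^+$ is faithfully flat, and hence surjective as a morphism of group schemes. The main obstacle I foresee is precisely this bookkeeping for $(T_r)^+$, in particular tracking the arithmetic Frobenius on coefficients through the identification $R(n,r)\otimes_{S,(-)^p}k\cong R(n,r-1)^{(1)}$; once that is in place the rest assembles immediately from Lemma~\ref{prop of tr mor}.
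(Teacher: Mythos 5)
Your proposal follows the same strategy as the paper: reduce to the three components via the triangulated structure and Lemma~\ref{prop of tr mor}, handle the $-$ and $0$ parts by the standard Frobenius quotient isomorphism, and for $(T_r)^+$ observe that the comorphism sends the free parameters of $G(n,r-1)^+$ to $p$-th powers of distinct free parameters of $G(n,r)^+$, hence is injective. The paper phrases the last step as ``the describing ideal of the closed immersion $G(n,r)^+/\Ker(T_r^+)\hookrightarrow(G(n,r-1)^+)^{(1)}$ is zero'' rather than via faithful flatness, but these are equivalent formulations of the same fact.
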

\begin{proof}
By the triangulated structure of $T_r$ and Lemma \ref{prop of tr mor}, its suffices to show the claim for 
$(T_r)^-,(T_r)^0,(T_r)^+$ separately. For $(T_r)^-$ it follows by \cite{Jan03}*{I.9.5} which states that 
it induces an isomorphism
\[\mathbb{G}_{a,r}^n/\mathbb{G}_{a,1}^n\cong (\mathbb{G}_{a,r-1}^n)^{(1)}\]
Also by \cite{Jan03}*{I.9.5}, it follows for $(T_r)^0$: It induces an isomorphism
\[\GL_n/(\GL_n)_1\cong (\GL_n)^{(1)}\]
It is left to show that the closed immersion
\[T_r^+:G(n,r)^+/\Ker(T_r^+)\hookrightarrow (G(n,r-1)^+)^{(1)}\]
is an isomorphism. The describing ideal of this immersion is the kernel of the morphism
\[(T_r^+)^{\#}:k[G(n,r-1)^+]^{(1)}\rightarrow k[G(n,r)^+]\]
As the parameters for $G(n,r)^+$ are free and $T_r$ acts as the $p$-th power on the parameters, 
$(T_r^+)^{\#}$ is injective which shows the claim.
\end{proof}
As an immediate consequence, we get that
\[T_r^{\ast}:G(n,r-1)^{(1)}\mathrm{-rep}\longrightarrow G(n,r)\mathrm{-rep}\]
preserves irreducible representations. More concretely
\[T_r^{\ast}L(\lambda,G(n,r-1)^{(1)})\cong L(p\lambda,G(n,r))\]
for all $\lambda\in X(T)_+$. This follows by Proposition \ref{prop: param of irred H rep for triang} 
and the observation that
\begin{eqnarray*}
L(p\lambda)=L(\lambda)^{[1]}&=&(F_{\GL_n}^1)^{\ast}\left(L(\lambda,G(n,r-1)^{(1)})^{(G(n,r-1)^{(1)})^-}\right)\\
&\subset&(T_r^{\ast}L(\lambda,G(n,r-1)^{(1)}))^{G(n,r)^-} 
\end{eqnarray*}

Furthermore we introduce the transfer homomorphisms
\[t_{r,i}:U_i(n,r)\rightarrow G(n,i)\]
for all $1\leq i\leq r$. Recall that
\[U_i=\{f\in G(n,r)\mid f(0)\in\mathbb{G}_{a,i}^n\}\]
So let $g\in U_i$. Then it induces an isomorphism on $R(n,i)$ which we denote by $t_{r,i}(g)$. Note that 
\[t_{r,i}(g)(x_i)\equiv g(x_i)\mod (x_1^{p^i},\ldots,x_n^{p^i})\]
Also $t_{r,i}$ is triangulated: The restriction to $G^-_i$ and $G^0$ is just the identity.

Finally, we discuss how the maps $P_r$, $T_r$, $t_{r,i}$ are related. First note that for all $r\geq 2$, the diagram
\[\xymatrix{G(n,r)\ar[dr]_-{P_r}\ar[r]^-{T_r}&G(n,r-1)^{(1)}\ar[d]^{P_{r-1}}\\
 &(G^0)^{(r)}}\]
commutes. Furthermore, for all $1\leq i\leq r$, the diagram
\[\xymatrix{U_i(n,r)\ar[dr]_{P_i}\ar[r]^-{t_{r,i}}&G(n,i)\ar[d]^{P_i}\\
 &(G^0)^{(i)}}\]
commutes.

We again denote $G^-=G(n,r)^-$ and $U_i=U_i(n,r)\subset G(n,r)$. Recall that $U_i^-=G^-_i$. 
Our next aim is to study the induction functor $\ind_{U_i}^{G(n,r)}$ and its relation to the induced functors 
of the three morphism types. 
\begin{Lem}\label{exactness of ind Ui}
For all $1\leq i\leq r$, we get for the induction functor
\[\res_{G^-}^{G(n,r)}\circ\ind_{U_i}^{G(n,r)}=\ind_{G^-_i}^{G^-}\circ \res_{G^-_i}^{U_i}\]
Furthermore $\ind_{U_i}^{G(n,r)}$ is exact. 
\end{Lem}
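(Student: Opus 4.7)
My plan is to reduce both assertions to a single scheme-theoretic identification $G(n,r)/U_i\cong G^-/G^-_i$ of left $G^-$-schemes. This follows directly from comparing the two triangular decompositions
\[G(n,r)=G^-G^0G^+\quad\text{and}\quad U_i=G^-_iG^0G^+.\]
Indeed, under the resulting scheme isomorphism $G(n,r)\cong G^-\times G^0\times G^+$, the closed subscheme $U_i$ corresponds exactly to $G^-_i\times G^0\times G^+$, so the multiplication map $G^-\times U_i\to G(n,r)$ becomes the map $(a,b,c,d)\mapsto(ab,c,d)$ on the product decomposition and factors through an isomorphism $G^-\times^{G^-_i}U_i\xrightarrow{\cong}G(n,r)$ (equivalently, it is a $G^-_i$-torsor). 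Passing to the $U_i$-quotient then yields the claimed $G^-$-equivariant isomorphism $G^-/G^-_i\cong G(n,r)/U_i$.

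For the first formula I would use the equivariant-morphism model of induction, $\ind_{U_i}^{G(n,r)}V\cong\Mor_{U_i}(G(n,r),V)$. Via the scheme-theoretic decomposition above, a $U_i$-equivariant function $G(n,r)\to V$ is determined by its restriction to $G^-$, and the $U_i$-equivariance translates into precisely $G^-_i$-equivariance on $G^-$ (the $G^-_i$-action on $V$ being the restriction of the $U_i$-action). This produces a natural isomorphism
\[\res_{G^-}^{G(n,r)}\ind_{U_i}^{G(n,r)}V\;\cong\;\Mor_{G^-_i}(G^-,V)\;=\;\ind_{G^-_i}^{G^-}\res_{G^-_i}^{U_i}V,\]
and one checks by inspection that it intertwines the two $G^-$-actions, both of which arise from left translation.

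For exactness I would invoke the standard criterion \cite{Jan03}*{I.5.13}: $\ind_H^G$ is exact whenever $G/H$ is an affine scheme. Our quotient $G(n,r)/U_i\cong G^-/G^-_i$ is a quotient of the finite infinitesimal group scheme $\mathbb{G}_{a,r}^n$ by a closed subgroup, hence finite and in particular affine, so exactness is immediate. The only genuinely technical point is establishing the scheme-theoretic identification $G(n,r)/U_i\cong G^-/G^-_i$ rather than merely a bijection on points; once the compatibility of the triangular decompositions is spelled out as above, both assertions of the lemma are formal consequences.
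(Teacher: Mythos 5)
Your proposal is correct and, at its core, takes the same route as the paper: both arguments rest on the compatibility of the two triangular decompositions (here $G(n,r)=B^+G^-$ and $U_i=B^+G^-_i$ with $B^+=G^0\ltimes G^+$) and then read off the identity from the morphism model $\ind_{U_i}^{G(n,r)}V=\Mor_{U_i}(G(n,r),V)\cong\Mor_{G^-_i}(G^-,V)$. The one substantive difference is in how exactness is concluded: you package the decomposition into a scheme-theoretic isomorphism $G(n,r)/U_i\cong G^-/G^-_i$ and then apply Jantzen I.5.13 directly to $\ind_{U_i}^{G(n,r)}$ because the quotient is finite (hence affine); the paper instead applies I.5.13 (together with I.9.5) only to $\ind_{G^-_i}^{G^-}$ and deduces exactness of $\ind_{U_i}^{G(n,r)}$ from the induction identity via the faithfulness of restriction. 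The two are equivalent, but yours carries the extra (harmless) burden of establishing the quotient isomorphism as a scheme isomorphism, whereas the paper sidesteps that by working only with the product decomposition. One small slip: with the paper's convention $f(ug)=uf(g)$ and the $G$-action by \emph{right} translation, the relevant factorization is $G(n,r)=U_iG^-$ (not $G^-U_i$) and the intertwined actions are right, not left, translation — a cosmetic left/right mismatch that does not affect the argument.
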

\begin{proof}
We use the morphism description of the induction $\ind_{U_i}^{G(n,r)}$. Then we obtain for all 
$U_i$-representations $V$ that
\begin{eqnarray*}
\ind_{U_i}^{G(n,r)}V&=&\{f\in\Mor(G(n,r),V)\mid f(ug)=uf(g)\ \forall u\in U_i\}\\
&=&\{ f\in\Mor(G^-,V)\mid f(cg)=cf(g)\ \forall c\in G^-_i\}
\end{eqnarray*}
by using the decomposition $G(n,r)=B^+\times G^-$ and $U_i=B^+\times G^-_i$ where $B^+=G(n,r)^0\ltimes G(n,r)^+$. 
The restriction of this to $G^-$ coincides with 
\[\ind_{G^-_i}^{G^-}\res_{G^-_i}^{U_i}V\]
as the $G^-$-action is given by right translation. This shows the first claim. 
Now $\ind_{G^-_i}^{G^-}$ is exact by \cite{Jan03}*{I.5.13, I.9.5}. This implies the exactness 
of $\ind_{U_i}^{G(n,r)}$.
\end{proof}
We start with the relation of $\ind_{U_i}^{G(n,r)}$ to the $\I$-functors. Recall that $G(n,r)$ is $r$-triangulated and $U_i$ is $i$-triangulated. So let us denote the $\I$-functor for a $j$-triangulated group as $\I_j$.
\begin{Lem}\label{comm of ind Ui with I}
For all $1\leq i\leq r$, both triangles of the diagram
\[\xymatrix{&G(n,i)\mathrm{-rep}\ar[rd]^{t_{r,i}^{\ast}}&\\
 G^0\mathrm{-rep}\ar[rr]^{\I_i}\ar[ur]^{\I_i}\ar[dr]_{\I_r}&&U_i(n,r)\mathrm{-rep}\ar[dl]^{\ind_{U_i}^{G(n,r)}}\\
&G(n,r)\mathrm{-rep}&}
\]
commute.
\end{Lem}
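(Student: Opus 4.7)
The plan is to verify the two triangles separately, treating the upper one via the explicit description of $\I$-functors from Lemma \ref{desc of I-functor} and the lower one via transitivity of induction.

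For the upper triangle, I would start by identifying both $\I_i(V)$ viewed as a $G(n,i)$-representation and $\I_i(V)$ viewed as a $U_i(n,r)$-representation with $\Mor(G^-_i,V)$. This is possible because $G(n,i)^- = G^-_i = U_i^-$, so Lemma \ref{desc of I-functor}(1) gives the same underlying $k$-vector space on both sides. The action in each case is governed by the formula $(hf)(a) = (ah)_0\, f((ah)_-)$, which depends only on the $0$- and $-$-parts of the triangular decomposition of $ah$. The key observation is that $(t_{r,i})^- = \id_{G^-_i}$ and $(t_{r,i})^0 = \id_{G^0}$; hence for $u\in U_i(n,r)$ and $a\in G^-_i$, applying the group homomorphism $t_{r,i}$ to the decomposition $au = (au)_+(au)_0(au)_-$ in $U_i(n,r)$ yields a decomposition of $a\cdot t_{r,i}(u) = t_{r,i}(au)$ in $G(n,i)$ whose $0$- and $-$-parts are still $(au)_0$ and $(au)_-$. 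By uniqueness of the decomposition, the two actions coincide, so $t_{r,i}^{\ast}\I_i(V) \cong \I_i(V)$ as $U_i(n,r)$-representations.

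For the lower triangle, I would unpack definitions and appeal to transitivity of induction. By definition $\I_i(V) = \ind_{B^+_{U_i}}^{U_i}(\pi^+)^{\ast}V$ where $B^+_{U_i} = G^0\ltimes G^+$ is the positive Borel of the $i$-triangulation of $U_i(n,r)$. Similarly $\I_r(V) = \ind_{B^+_{G(n,r)}}^{G(n,r)}(\pi^+)^{\ast}V$ with $B^+_{G(n,r)} = G^0\ltimes G^+$. Since the $G^+$ factor is the same in both $i$- and $r$-triangulations, these two Borels are literally the same subgroup, and $(\pi^+)^{\ast}V$ is the same $B^+$-representation. Transitivity of induction then gives
\[\ind_{U_i}^{G(n,r)}\I_i(V) \;=\; \ind_{U_i}^{G(n,r)}\ind_{B^+}^{U_i}(\pi^+)^{\ast}V \;\cong\; \ind_{B^+}^{G(n,r)}(\pi^+)^{\ast}V \;=\; \I_r(V).\]

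Neither triangle presents a serious obstacle; the only subtlety is purely notational, namely verifying that the subgroup $B^+$ occurring in the $i$-triangulation of $U_i(n,r)$ genuinely coincides with the one in the $r$-triangulation of $G(n,r)$, and that $t_{r,i}$ is the identity on the factors that control the $\I$-action formula. Once those are recorded, both commutativities are formal.
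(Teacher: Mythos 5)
Your proof is correct and follows essentially the same approach as the paper: the upper triangle is handled via the $\Mor(H^-,V)$ description from Lemma \ref{desc of I-functor} together with the fact that $G(n,i)^- = U_i(n,r)^- = G^-_i$ and that $t_{r,i}$ is the identity on the $-$ and $0$ factors, while the lower triangle is handled via transitivity of induction after noting that the Borel $B^+ = G^0\ltimes G^+$ is literally the same subgroup in both the $i$-triangulation of $U_i(n,r)$ and the $r$-triangulation of $G(n,r)$. You simply spell out the details that the paper's terse proof leaves to the reader.
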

\begin{proof}
The commutativity of the upper triangle follows immediately from $G(n,i)^-=U_i(n,r)^-$ and Lemma \ref{desc of I-functor}.

The commutativity of the lower triangle follows from $G(n,r)=B^+\times G^-$, $U_i=B^+\times G^-_i$, and the transitivity of induction \cite{Jan03}*{I.3.5}.
\end{proof}

Finally there is a more complicated relation of the induction $\ind_{U_i}^{G(n,r)}$ to the functors $P_i^{\ast}$, $T_j^{\ast}$, and $\I$:
\begin{Lem}\label{comm of ind Ui with I and L}
For all $1\leq i\leq r$, the triangle and the square of the following diagram commute up to functor isomorphism
\[\xymatrix{&G(n,i)\mathrm{-rep}\ar[rd]^-{t_{r,i}^{\ast}}&\\
(G^0)^{(i)}\mathrm{-rep}\ar[ru]^-{P_i^{\ast}}\ar[rr]^{P_i^{\ast}}\ar[d]_{\I_{r-i}}&&U_i(n,r)\mathrm{-rep}\ar[d]^{\ind_{U_i}^{G(n,r)}}\\
G(n,r-i)^{(i)}\mathrm{-rep}\ar[rr]^-{(T^i)^{\ast}}&&G(n,r)\mathrm{-rep}}\]
Here $T^i:G(n,r)\rightarrow G(n,r-i)^{(i)}$ is the composition
\[T^i:=T_{r-(i-1)}^{(i-1)}\circ\cdots\circ T_r\]
\end{Lem}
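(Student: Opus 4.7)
I would handle the triangle first, as it is formal: it reduces to the functoriality of pullback applied to the identity $P_i\circ t_{r,i}=P_i:U_i(n,r)\to(G^0)^{(i)}$ that was already recorded at the end of Section~\ref{section:transfer homs}. So the real content is the square.

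\textbf{Square, identification of vector spaces.} Fix a $(G^0)^{(i)}$-representation $V$. My plan is to identify both sides of the square, as vector spaces, with $\Mor((G^-_{r-i})^{(i)},V)$ and then to check that the two $G(n,r)$-actions agree. For the left-hand side I would apply Lemma~\ref{exactness of ind Ui} to get
\[\res_{G^-}\ind_{U_i}^{G(n,r)}(P_i^{\ast}V)\cong\ind_{G^-_i}^{G^-}\res_{G^-_i}^{U_i}(P_i^{\ast}V),\]
then observe that $P_i=\pi^+\circ F^i_{U_i}$ kills $G^-_i$ (which has height $\leq i$), so the inner restriction is a trivial $G^-_i$-module and the induction is simply $\Mor(G^-/G^-_i,V)$; the short exact sequence $1\to G^-_i\to G^-\xrightarrow{F^i}(G^-_{r-i})^{(i)}\to 1$ then identifies this with $\Mor((G^-_{r-i})^{(i)},V)$. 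For the right-hand side, Lemma~\ref{desc of I-functor} applied to the $(r-i)$-triangulation of $G(n,r-i)^{(i)}$ gives $\I_{r-i}(V)\cong\Mor((G^-_{r-i})^{(i)},V)$ directly, and $(T^i)^{\ast}$ leaves the underlying vector space untouched.

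\textbf{Square, actions; main obstacle.} The bulk of the proof is comparing the two $G(n,r)$-actions under these identifications. By Lemma~\ref{desc of I-functor} the action on the right-hand side will read
\[(g\tilde f)(a)=(aT^i(g))_0\,\tilde f((aT^i(g))_-)\]
for $g\in G(n,r)$, $a\in(G^-_{r-i})^{(i)}$, with the decomposition taken in $G(n,r-i)^{(i)}$. For the left-hand side, unwinding the induction formula via $G(n,r)=B^+\cdot G^-$, using $U_i$-equivariance, and using that $P_i|_{B^+}$ factors through the $G^0$-component as $F^i_{\GL_n}$, one obtains
\[(gf)(c)=F^i((cg)_0)\,f((cg)_-)\]
for $c\in G^-$, where $cg=(cg)_+(cg)_0(cg)_-$ is the triangular decomposition in $G(n,r)$. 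The matching then uses that $T^i$ is triangulated with $(T^i)^-=F^i|_{G^-}$ (whose kernel is exactly $G^-_i$) and $(T^i)^0=F^i_{\GL_n}$, so applying $T^i$ componentwise to $cg=(cg)_+(cg)_0(cg)_-$ lines up the two formulas. The potentially delicate point is that $T^i$ is a composition of several $T_j$'s whose $+$-factors are not Frobenius morphisms, but only the $-$ and $0$-components enter the action formulas, and for these the composition is literally the $i$-th Frobenius; this makes the matching clean once the identification $G^-/G^-_i\cong(G^-_{r-i})^{(i)}$ is set up.
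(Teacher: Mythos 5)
Your proposal is correct and follows essentially the same route as the paper: the triangle is disposed of via $P_i\circ t_{r,i}=P_i$, and for the square both sides are identified with $\Mor$-spaces of functions on $(G^-)^{(i)}_{r-i}$ (resp.\ $G^-/G^-_i$) pulled back along $F^i_{G^-}$, with equivariance checked via the triangulated structure of $T^i$. Your write-up is slightly more explicit than the paper's about why the action formulas match---isolating the key point that only the $-$- and $0$-components of $T^i$ enter, and these are genuine Frobenius morphisms even though the $+$-components of the $T_j$ are not---but the underlying argument is the same.
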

\begin{proof}
The commutativity of the triangle follows from $P_i\circ t_{r,i}=P_i$. 

For the commutativity of the square, note that the morphism
\[T^i=T_{r-(i-1)}^{(i-1)}\circ\cdots\circ T_r\]
is triangulated with 
\[(T^i)^-=F^i_{G^-}:G^-\rightarrow (G^-)^{(i)}_{r-i}\]
and 
\[(T^i)^0=F^i_{G^0}:G^0\rightarrow (G^0)^{(i)}\] 

Recall the morphism description of the functor $\I$ from Lemma \ref{desc of I-functor}. Let $V$ be a $(G^0)^{(i)}$-representation. On one hand
\[(T^i)^{\ast}\I_{r-i}(V)=\Mor((G^-)^{(i)}_{r-i},V)\]
as $(G(n,r-i)^{(i)})^-=(G^-)^{(i)}_{r-i}$. On the other hand, $G^-_i$ operates trivially on $P_i^{\ast}V$ which implies
\begin{eqnarray*}
\ind_{U_i}^{G(n,r)}P_i^{\ast} V&=&\{ f\in\Mor(G^-,P_i^{\ast}V)\mid f(cg)=cf(g)\ \forall c\in G^-_i\}\\
&=&\Mor(G^-/G^-_i,P_i^{\ast}V) 
\end{eqnarray*}
(cf.~the proof of Lemma \ref{exactness of ind Ui}). As the $i$-th Frobenius $F^i_{G^-}:G^-\rightarrow(G^-)^{(i)}_{r-i}$ induces an isomorphism $G^-/G^-_i\cong (G^-)^{(i)}_{r-i}$, 
it induces a natural linear isomorphism 
\[(T^i)^{\ast}\I_{r-i}(V)=\Mor((G^-)^{(i)}_{r-i},V)\xrightarrow{(F^i)^{\ast}}\Mor_{G^-_i}(G^-,P_i^{\ast}V)=\ind_{U_i}^{G(n,r)}P_i^{\ast} V\]
This isomorphism is in fact $G(n,r)$-equivariant which can be seen by using the triangulated structure of $T^i$. 
\end{proof}

\section{Differentials and Cartier's Theorem}
We are now going to introduce some concrete $G(n,r)$-representations which play a major role in the description of 
the irreducible representations. We consider the K\"ahler-differentials
\[\Omega_r:=\Omega_{R(n,r),k}=\bigoplus_{i=1}^nR(n,r)dx_i\]
We claim that this is a canonical $G(n,r)$-representation.
\begin{Not}
For any $g\in G(n,r)$ and any $R(n,r)$-module $M$, we denote by $M^{(g)}$ the
module \emph{twisted by $g$}, that is
\[x\ast^{(g)} m:=g(x)m\]
\end{Not}
We obtain that
\[R(n,r)\xrightarrow{g}R(n,r)^{(g)}\xrightarrow{d}\Omega_r^{(g)}\]
is an $R(n,r)$-derivation which induces an $R(n,r)$-module automorphism
\[\partial g:\Omega_r\rightarrow \Omega_r^{(g)}\]
This reads as
\[\partial g(fdx_i)=g(f)dg(x_i)\]
and provides a canonical $G(n,r)$-action on $\Omega_r$ as a $k$-vector space.
Moreover this operation extends to exterior and symmetric powers over $R(n,r)$
as
well as tensor products. In particular, we obtain a representation by the
$i$-th higher differentials
\[\Omega^i_r:=\Lambda^i_{R(n,r)}\Omega_r=\bigoplus_{j_1<\ldots<j_i}R(n,r)dx_{j_1}\wedge\ldots\wedge dx_{j_i}\]
They are connected by the \emph{de Rham complex}
\[0\rightarrow R(n,r)\xrightarrow{d_1}\Omega^1_r\xrightarrow{d_2}\cdots\xrightarrow{d_n}\Omega_r^n\rightarrow 0\]
The differential maps are defined by
\[d_i(fdx_{j_1}\wedge\ldots\wedge dx_{j_i}):=df\wedge dx_{j_1}\wedge\ldots\wedge dx_{j_i}\]
In fact the maps $d_i$ are $G(n,r)$-equivariant which can be shown by induction on $i$.
\begin{Rem}
Note that with $U=k^n$, we canonically get
\[\Omega_r^i\cong R(n,r)\otimes_k \Lambda^i U\cong \I_r(\Lambda^i U)\]
according to Example \ref{Gnr-action on I}. Hence, for all $(\GL_n)^{(r)}$-representations $V$, we obtain
\[\I_r(\Lambda^i U\otimes V^{[r]})\cong \Omega_r^i\otimes P_r^{\ast}V\]
by Lemma \ref{Tensor Id for I}. That is, we also have a twisted de Rham complex $\Omega_r^{\bullet}\otimes P_r^{\ast}V$. 
\end{Rem}
Now we use the transfer morphism $t_{r,j}:U_j(n,r)\rightarrow G(n,j)$ in order to compare the de Rham complexes 
$\Omega_j^{\bullet}$ and $\Omega_r^{\bullet}$ by the functor
\[\ind_{U_j}^{G(n,r)}\circ t_{r,j}^{\ast}:G(n,j)\mathrm{-rep}\longrightarrow G(n,r)\mathrm{-rep}\]
\begin{Prop}\label{compare deRhams}
For all $1\leq j<r$, we get
\[\ind_{U_j}^{G(n,r)}(t_{r,j}^{\ast}\Omega_j^{\bullet})\cong \Omega_r^{\bullet}\]
as complexes. Furthermore 
\[\ind_{U_j}^{G(n,r)}(t_{r,j}^{\ast}H^i(\Omega_j^{\bullet}))\cong H^i(\Omega_r^{\bullet})\]
for all $0\leq i\leq n$.
\end{Prop}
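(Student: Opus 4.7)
The plan is to produce an explicit chain-map isomorphism via Frobenius reciprocity applied to the reduction $R(n,r)\twoheadrightarrow R(n,j)$, and then to deduce the cohomology statement from exactness of $\ind_{U_j}^{G(n,r)}\circ t_{r,j}^{\ast}$. First, by the Remark, $\Omega_r^i\cong\I_r(\Lambda^i U)$ with $U=k^n$, and similarly $\Omega_j^i\cong\I_j(\Lambda^i U)$. Combining the two triangles of Lemma \ref{comm of ind Ui with I} applied to $V=\Lambda^i U$ yields a natural $G(n,r)$-equivariant isomorphism
\[\ind_{U_j}^{G(n,r)}(t_{r,j}^{\ast}\Omega_j^i)\;\cong\;\I_r(\Lambda^i U)\;\cong\;\Omega_r^i\]
for each $i$, so both sides of the desired complex-level isomorphism have equal $k$-dimension in every degree.

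Next I would construct the chain map. The definition of $t_{r,j}$ amounts to saying that every $g\in U_j$ preserves the ideal $(x_1^{p^j},\ldots,x_n^{p^j})\subset R(n,r)$, so the reduction $\pi\colon R(n,r)\twoheadrightarrow R(n,j)$ is $U_j$-equivariant when the target is viewed as $t_{r,j}^{\ast}R(n,j)$. Functoriality of K\"ahler differentials then gives a $U_j$-equivariant chain map $\pi^{\bullet}\colon\Omega_r^{\bullet}\to t_{r,j}^{\ast}\Omega_j^{\bullet}$, and Frobenius reciprocity turns this into a $G(n,r)$-equivariant chain map
\[\Phi^{\bullet}\colon\Omega_r^{\bullet}\longrightarrow\ind_{U_j}^{G(n,r)}(t_{r,j}^{\ast}\Omega_j^{\bullet}),\qquad \Phi^i(\omega)(g)=\pi^i(g\cdot\omega).\]
To conclude the first statement it remains to check that each $\Phi^i$ is an isomorphism; by equality of dimensions it suffices to show injectivity. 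Writing $\omega=\sum_{I,J}c_{I,J}\,x^I\,dx^J$ with $I\in\{0,\ldots,p^r-1\}^n$ and $|J|=i$, and using that $h\in G^-=\mathbb{G}_{a,r}^n$ acts by $x_k\mapsto x_k+h_k$, one obtains
\[\Phi^i(\omega)(h)=\sum_{I,J}c_{I,J}\sum_{M\le I,\,M_k<p^j}\binom{I}{M}h^{I-M}x^M\,dx^J.\]
Reading off the coefficient of $dx^J$ (the case $M=0$) yields the polynomial $\sum_I c_{I,J}\,h^I\in R(n,r)$, and linear independence of the monomials $h^I$ with $I_k<p^r$ forces all $c_{I,J}=0$.

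Finally, for the cohomology statement I would use that $\ind_{U_j}^{G(n,r)}\circ t_{r,j}^{\ast}$ is exact: $t_{r,j}^{\ast}$ is pullback of representations along a group homomorphism and hence exact, and $\ind_{U_j}^{G(n,r)}$ is exact by Lemma \ref{exactness of ind Ui}. Exact functors commute with taking cohomology of a complex, so combining this with the complex-level isomorphism established above yields the second claim. The main obstacle is the injectivity verification: one must unravel the Frobenius-reciprocity formula for $\Phi^i$ together with the $G^-$-translation action and then isolate a witness monomial whose coefficient determines $\omega$.
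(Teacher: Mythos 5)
Your proof is correct, and it takes a genuinely more constructive route than the paper's. The paper obtains the degree-wise isomorphisms $\ind_{U_j}^{G(n,r)}(t_{r,j}^{\ast}\Omega_j^{i})\cong\Omega_r^i$ directly from Lemma \ref{comm of ind Ui with I}, then asserts (``a tedious exercise'') that these canonical isomorphisms are compatible with the de Rham differentials, and finally appeals to exactness of $\ind_{U_j}^{G(n,r)}$. You instead build a concrete $G(n,r)$-equivariant chain map $\Phi^{\bullet}$ from scratch: starting from the $U_j$-equivariant reduction $\pi\colon R(n,r)\twoheadrightarrow R(n,j)$ (which exists because every $g\in U_j$ stabilizes the ideal $(x_1^{p^j},\ldots,x_n^{p^j})$, as $g(x_k)^{p^j}$ has vanishing constant term and all its higher terms are $p^j$-th powers of monomials), functoriality of K\"ahler differentials, and Frobenius reciprocity, and you then verify $\Phi^i$ is an isomorphism by combining the dimension count from Lemma \ref{comm of ind Ui with I} with a direct injectivity check via the $G^-$-translation action. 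What this buys is a complete, self-contained verification of what the paper leaves as an exercise: the chain-map property comes for free from naturality of the adjunction and the fact that $\pi^{\bullet}$ is a chain map, and the injectivity computation is clean (taking the $M=0$ coefficient isolates $\sum_I c_{I,J}h^I$ in $k[G^-]$, whose monomial basis forces all $c_{I,J}=0$). The cohomology step in both proofs is identical, relying on exactness of $\ind_{U_j}^{G(n,r)}\circ t_{r,j}^{\ast}$ via Lemma \ref{exactness of ind Ui}.
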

\begin{proof}
By Lemma \ref{comm of ind Ui with I} and the previous remark, we get canonical isomorphisms
\begin{eqnarray*}
\ind_{U_j}^{G(n,r)}(t_{r,j}^{\ast}\Omega_j^{i})&\cong&\ind_{U_j}^{G(n,r)}(t_{r,j}^{\ast}\I_j\Lambda^i U)\\
&\cong&\I_r(\Lambda^i U)\\
&\cong&\Omega^i_r
\end{eqnarray*}
Similarly, one can check by a tedious exercise that also
\[\ind_{U_j}^{G(n,r)}(t_{r,j}^{\ast}(d_i:\Omega_j^{i-1}\rightarrow \Omega_j^i))\cong(d_i:\Omega_r^{i-1}\rightarrow\Omega_r^{i})\]
The claim about the cohomology follows from the exactness of $\ind_{U_j}^{G(n,r)}$ Lemma \ref{exactness of ind Ui}.
\end{proof}
That is, we can compute the cohomology of the complex $\Omega_r^{\bullet}$ by the cohomology of the complex 
$\Omega_1^{\bullet}$. 

Before we do this, we need an additional observation. Let $f^r:R(n,r)\rightarrow R(n,r)$ the $r$-th power of the absolute Frobenius. 
It factors as
\[R(n,r)\xrightarrow{f^r}k\hookrightarrow R(n,r)\]
as $P^{p^r}=P(0)^{p^r}$ for all $P\in R(n,r)$. This provides induced $G(n,r)$-representations
\[\Omega_r^i\otimes_{R(n,r),f^r}k\cong \Lambda^i U\otimes_{k,f^r}k=\Lambda^iU^{(r)}\]
where again $U=k^n$. 
\begin{Lem}
For all $1\leq i\leq n$, we get
\[\Omega_r^i\otimes_{R(n,r),f^r}k\cong P_r^{\ast}\Lambda^i U^{(r)}\]
\end{Lem}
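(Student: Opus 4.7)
The plan is to invoke the canonical identification $\Omega_r^i\cong R(n,r)\otimes_k\Lambda^iU\cong\I_r(\Lambda^iU)$ from the preceding Remark, and then track what the tensor product $\otimes_{R(n,r),f^r}k$ does to both the underlying $k$-vector space and to the $G(n,r)$-action.

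First I would exploit the factorization $f^r\colon R(n,r)\xrightarrow{\varphi}k\xrightarrow{(-)^{p^r}}k\hookrightarrow R(n,r)$, where $\varphi$ is the augmentation, to split the construction into two steps: reducing modulo the augmentation ideal $\mathfrak{m}\subset R(n,r)$ turns $R(n,r)\otimes_k\Lambda^iU$ into $\Lambda^iU$, and the subsequent scalar extension along the $p^r$-th power Frobenius yields $\Lambda^iU^{(r)}$. This gives the sought vector-space isomorphism $\Omega_r^i\otimes_{R(n,r),f^r}k\cong\Lambda^iU^{(r)}$ canonically. Before moving on, one should check that the $G(n,r)$-action even descends to the tensor product; this is because $g(P)^{p^r}=g(P^{p^r})=g(P(0)^{p^r})=P(0)^{p^r}$ makes the module relations compatible with the rule $g\cdot(\omega\otimes c)=\partial g(\omega)\otimes c$.

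The heart of the proof is to verify that this linear identification is $G(n,r)$-equivariant when the right-hand side carries the pullback action via $P_r$. I would compute on a basis vector $dx_J\otimes 1$ with $J=(j_1<\ldots<j_i)$: expanding
\[\partial g(dx_J)=dg(x_{j_1})\wedge\ldots\wedge dg(x_{j_i})=\sum_K a_{JK}\,dx_K\]
gives coefficients $a_{JK}\in R(n,r)$ equal to the determinant of $(\partial g(x_{j_l})/\partial x_{k_m})_{l,m}$. After tensoring with $k$ via $f^r$, each $a_{JK}$ becomes $a_{JK}(0)^{p^r}$, and $a_{JK}(0)$ is precisely the $(K,J)$-minor of the Jacobian matrix $J_g$. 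Since $P_r(g)=F^r_{\GL_n}(J_g)$ has entries $(J_g)_{ij}^{p^r}$ and the determinant commutes with the entry-wise $p^r$-power, this coefficient equals $\det(P_r(g)[K,J])$, which is exactly the matrix entry describing the standard action of $P_r(g)\in(\GL_n)^{(r)}$ on $\Lambda^iU^{(r)}$ in the standard basis.

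Apart from routine bookkeeping with the transpose in the minor indices, no deep obstacle stands in the way. The main thing to watch for is keeping the three different Frobenius twists -- on $R(n,r)$ (via $f^r$), on $k$ (via $(-)^{p^r}$), and on $\GL_n$ (via $F^r_{\GL_n}$) -- clearly separated so that the compatibility with $P_r$ falls out transparently.
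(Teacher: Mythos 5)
Your proof is correct and follows essentially the same approach as the paper: both verify that the group homomorphism induced by $\Omega_r^i\otimes_{R(n,r),f^r}k$ is $P_r$ by reducing the coefficients of $\partial g$ modulo the augmentation ideal and raising to the $p^r$-th power, so that the entries of the Jacobian differential become the entries of $P_r(g)$. The only difference is organizational: the paper checks this for $i=1$ and then invokes compatibility of exterior powers with base change, whereas you carry out the minor/determinant computation directly for general $i$, in effect unwinding that compatibility by hand using that the Frobenius commutes with taking determinants of submatrices.
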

\begin{proof}
Observe that the group homomorphism
\[G(n,r)\rightarrow \GL(U^{(r)})=(\GL_n)^{(r)}\]
corresponding to the $G(n,r)$-representation $\Omega_r\otimes_{R(n,r),f^r}k$ coincides with $P_r$. This provides the claim for $i=1$. 
The claim for $i\geq 2$ follows from this by compatibility with exterior powers.
\end{proof}
We get a representation-theoretic reformulation of Cartier's famous theorem about the cohomology of the de Rham complex. It follows from 
its proof in \cite{Kat70}*{Theorem 7.2} and the previous lemma.
\begin{Thm}[Cartier]
There is a unique collection of isomorphisms of $G(n,1)$-representations 
\[C^{-1}:P_1^{\ast}\Lambda^i U^{(1)}\rightarrow H^i(\Omega_1^{\bullet})\]
which satisfies
\begin{enumerate}
 \item $C^{-1}(1)=1$
 \item $C^{-1}(\omega\wedge\tau)=C^{-1}(\omega)\wedge C^{-1}(\tau)$
 \item $C^{-1}(df\otimes 1)=[f^{p-1}df]\in H^1(\Omega_1^{\bullet})$
\end{enumerate}
\end{Thm}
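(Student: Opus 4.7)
The plan is to reduce to the classical Cartier isomorphism of \cite{Kat70}*{Theorem 7.2} and then upgrade the resulting $k$-linear isomorphism to a $G(n,1)$-equivariant one by checking compatibility on generators.

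First I would extract from Katz the uniqueness and existence of $C^{-1}$ at the level of $k$-vector spaces. By the previous lemma, the source $P_1^{\ast}\Lambda^i U^{(1)}$ is identified as a $G(n,1)$-representation with $\Omega_1^i\otimes_{R(n,1),f^1}k$, and the classes $dx_{j_1}\otimes 1,\ldots,dx_n\otimes 1$ generate the direct sum $\bigoplus_i \Omega_1^i\otimes_{R(n,1),f^1}k$ as a graded $k$-algebra. Uniqueness is then formal: property (3) fixes $C^{-1}(dx_j\otimes 1)=[x_j^{p-1}dx_j]$, property (1) handles the degree-zero piece, and property (2) determines $C^{-1}$ on all wedges. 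Existence is the content of Katz's construction: the forms $x_j^{p-1}dx_j$ are closed, the induced map is well-defined, and dimension counts show it is an isomorphism onto $H^{\bullet}(\Omega_1^{\bullet})$.

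Next I would verify $G(n,1)$-equivariance. The target $H^i(\Omega_1^{\bullet})$ carries a $G(n,1)$-action because the differentials $d_i$ are $G(n,1)$-equivariant. For $g\in G(n,1)$ and $f\in R(n,1)$, the source action sends $df\otimes 1$ to $\partial g(df)\otimes 1=dg(f)\otimes 1$, while on the target
\[
g\cdot [f^{p-1}df]=[\partial g(f^{p-1}df)]=[g(f)^{p-1}dg(f)]=C^{-1}(dg(f)\otimes 1)
\]
by property (3) applied to $g(f)$ in place of $f$. Hence $C^{-1}$ commutes with $g$ on the generators $df\otimes 1$, and property (2) extends this to all of $P_1^{\ast}\Lambda^i U^{(1)}$. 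Uniqueness among $G(n,1)$-equivariant maps follows a fortiori from the $k$-linear uniqueness above.

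The main obstacle is the bookkeeping of twists: one must confirm that the $G(n,1)$-action on $P_1^{\ast}\Lambda^i U^{(1)}$, which factors through $P_1$ and the $(\GL_n)^{(1)}$-action on the exterior power, really agrees with the $\partial g$-action on $\Omega_1^i\otimes_{R(n,1),f^1}k$. This is precisely the content of the previous lemma, and it ultimately reduces to the explicit formula for $P_r$ from Section~\ref{section:transfer homs}. Once this matching is in place the equivariance check collapses to the one-line computation above, and the rest of the theorem is carried by the classical Katz argument.
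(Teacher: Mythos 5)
Your proposal reproduces the paper's argument: the paper, too, imports existence and $k$-linear uniqueness from \cite{Kat70}*{Theorem 7.2}, uses the preceding lemma to identify $P_1^{\ast}\Lambda^i U^{(1)}$ with $\Omega_1^i\otimes_{R(n,1),f^1}k$, and (in the remark following the statement) notes that property (2) reduces $G(n,1)$-equivariance to the case $i=1$, where it follows from property (3) --- precisely the computation $g\cdot[f^{p-1}df]=[g(f)^{p-1}dg(f)]=C^{-1}(dg(f)\otimes 1)$ you carry out. Same route, with your version merely writing out the one-line verification the paper leaves implicit.
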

\begin{Rem}
In fact, the proof in \cite{Kat70} does not provide the property that the $C^{-1}$ are $G(n,1)$-equivariant. 
But by property $(2)$, it suffices to check it for $i=1$ which follows from property $(3)$. 
\end{Rem}
As announced before, we can deduce a computation of the cohomology of $\Omega_r^{\bullet}$ for $r\geq 2$ with help
of the transfer morphism $T_r:G(n,r)\rightarrow G(n,r-1)^{(1)}$.
\begin{Cor}
For all $r\geq 2$ and $1\leq i\leq n$, we get an isomorphism
\[H^i(\Omega_r^{\bullet})\cong T_r^{\ast}((\Omega_{r-1}^i)^{(1)})\]
of $G(n,r)$-representations.
\end{Cor}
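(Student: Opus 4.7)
The plan is to chain together three inputs already in our hands: Proposition \ref{compare deRhams} with $j=1$, Cartier's Theorem, and the two commutativities of Lemma \ref{comm of ind Ui with I and L} with the subscript there set to $1$. Together they relate the cohomology of $\Omega_r^{\bullet}$, via the comparison with the cohomology of $\Omega_1^{\bullet}$, to a functor of the shape $T_r^{\ast}\I_{r-1}(-)$, which almost immediately rewrites as $T_r^{\ast}((\Omega_{r-1}^i)^{(1)})$.

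Concretely, I first apply Proposition \ref{compare deRhams} at $j=1$ to obtain
\[H^i(\Omega_r^{\bullet})\cong \ind_{U_1(n,r)}^{G(n,r)}\bigl(t_{r,1}^{\ast}H^i(\Omega_1^{\bullet})\bigr)\]
By Cartier's Theorem, $H^i(\Omega_1^{\bullet})\cong P_1^{\ast}\Lambda^i U^{(1)}$ as $G(n,1)$-representations, so the right hand side becomes $\ind_{U_1}^{G(n,r)}\bigl(t_{r,1}^{\ast}P_1^{\ast}\Lambda^i U^{(1)}\bigr)$. The upper triangle in Lemma \ref{comm of ind Ui with I and L} (specialized to parameter $1$) gives $t_{r,1}^{\ast}\circ P_1^{\ast}\cong P_1^{\ast}$, and the square gives $\ind_{U_1}^{G(n,r)}\circ P_1^{\ast}\cong (T^1)^{\ast}\circ\I_{r-1}$. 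As $T^1=T_r$ by definition, the chain assembles to
\[H^i(\Omega_r^{\bullet})\cong T_r^{\ast}\I_{r-1}(\Lambda^i U^{(1)})\]
where $\I_{r-1}$ here denotes the $\I$-functor of the $(r-1)$-triangulated group $G(n,r-1)^{(1)}$.

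It then remains to identify $\I_{r-1}(\Lambda^i U^{(1)})$ with $(\Omega_{r-1}^i)^{(1)}$ as a $G(n,r-1)^{(1)}$-representation. The Remark following the definition of $\Omega_r^i$ supplies $\Omega_s^i\cong \I_s(\Lambda^i U)$ for every $s\geq 1$, with $\I_s$ the $\I$-functor of $G(n,s)$. Since the Frobenius twist $(-)^{(1)}$ is base change along the Frobenius of $k$ and therefore an exact tensor functor compatible with the operations $(\pi^+)^{\ast}$ and $(j^+)_{\ast}$ defining $\I$, twisting the case $s=r-1$ yields $(\Omega_{r-1}^i)^{(1)}\cong \I_{r-1}(\Lambda^i U^{(1)})$, now with $\I_{r-1}$ interpreted on the Frobenius-twisted group. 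The main delicate point is exactly this compatibility, since the symbol $\I_{r-1}$ carries two different meanings that must be matched up under the twist; apart from this, the proof is pure bookkeeping in commutative diagrams already established.
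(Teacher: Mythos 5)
Your proof is correct and follows the same route as the paper: chaining Proposition \ref{compare deRhams} at $j=1$, Cartier's Theorem, and the two commutativities of Lemma \ref{comm of ind Ui with I and L} at parameter $1$ to land on $T_r^{\ast}\I_{r-1}(\Lambda^i U^{(1)})$, and then recognizing $\I_{r-1}(\Lambda^i U^{(1)})\cong(\Omega_{r-1}^i)^{(1)}$. The only thing you add beyond the paper's terse four-line chain is the explicit remark about the compatibility of the Frobenius twist with the $\I$-functor, which the paper leaves implicit.
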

\begin{proof}
According to Proposition \ref{compare deRhams}, Cartier's Theorem, and Lemma \ref{comm of ind Ui with I and L}, we obtain
\begin{eqnarray*}
H^i(\Omega_r^{\bullet})&\cong&\ind_{U_1}^{G(n,r)}(t_{r,1}^{\ast}(H^i(\Omega_1^{\bullet})))\\
&\cong&\ind_{U_1}^{G(n,r)}(t_{r,1}^{\ast}(P_1^{\ast}\Lambda^iU^{(1)}))\\
&\cong&T_r^{\ast}(\I_{r-1}(\Lambda^i U^{(1)}))\\
&\cong&T_r^{\ast}((\Omega_{r-1}^i)^{(1)})
\end{eqnarray*}
Whence the claim.
\end{proof}
Finally we want to twist the de Rham complex $\Omega_r^{\bullet}$ with an $(\GL_n)^{(1)}$-representation $V$. 
For $r=1$, we already introduced the twist
\[\Omega_1^{\bullet}\otimes_l P_1^{\ast} V\cong \I_1(\Lambda^{\bullet}U\otimes V^{[1]})\]
By Cartier's Theorem, its cohomology computes as
\[H^i(\Omega_1^{\bullet})\otimes_k P_1^{\ast}V\cong P_1^{\ast}(\Lambda^iU^{(1)}\otimes_k V)\] 
Again, we consider the functor
\[\ind_{U_1}^{G(n,r)}\circ t_{r,1}^{\ast}:G(n,1)\mathrm{-rep}\longrightarrow G(n,r)\mathrm{-rep}\]
According to Lemma \ref{comm of ind Ui with I}, it provides a complex
\[\ind_{U_1}^{G(n,r)}(t_{r,1}^{\ast}\I_1(\Lambda^{\bullet}U\otimes V^{[1]}))=\I_r(\Lambda^{\bullet}U\otimes V^{[1]})\]
of $G(n,r)$-representations. As $\GL_n$-representations, this complex reads as
\[\Omega_r^{i-1}\otimes_k V^{[1]}\xrightarrow{d_i\otimes\id}\Omega_r^i\otimes_k V^{[1]}\]
Similarly as in the previous Corollary, we obtain
\[H^i(\Omega_r^{\bullet}\otimes_k V^{[1]})\cong T_r^{\ast}((\Omega_{r-1}^i)^{(1)}\otimes_k V)\]
for its cohomology.

\section{Irreducible Representations}\label{section:irred Gnr reps}
We are now going to compute the irreducible $G(n,r)$-representations
\[L(\lambda,G(n,r))=\soc\I(L(\lambda))=G(n,r)L(\lambda)\subset \I(L(\lambda))\]
for all $\lambda\in X(T)_+$ with respect to their associated irreducible $\GL_n$-re\-pre\-sen\-ta\-tions. 
For this, we take as split maximal torus the diagonal matrices. This torus affords canonical projections $\varepsilon_i\in X(T)$ for $1\leq i\leq n$ 
which are a free $\Z$-basis of the character group $X(T)$.

According to Proposition \ref{analogue of Steinbergs TPT} there is a $\mathrm{mod}\ p^r$-periodicity for the dominant
weights and one can restrict to the $L(\lambda,G(n,r))$ with $\lambda\in
X_r(T)$. This will cover the case $r=1$. The case $r\geq 2$ is more subtle.

We restrict to the following subset $X_1'(T)\subset X_1(T)$:
\[X_1'(T):=\left\{\lambda=\sum_{i=1}^n m_i(\epsilon_1+\ldots+\epsilon_i)\in
X(T)\mid \forall\ 1\leq i\leq n: 0\leq m_i< p\right\}\]
\begin{Not}
As $X_1'(T)$ is a set of representatives for $X(T)/pX(T)$, we get a unique
decomposition
\[\lambda=r(\lambda)+ps(\lambda)\]
for all $\lambda\in X(T)_+$ with $r(\lambda)\in X_1'(T)$ and $s(\lambda)\in X(T)_+$. We call $r(\lambda)$
the \emph{$\mathrm{mod}\ p$-reduction} of $\lambda$.
\end{Not}

The following Proposition covers the dominant weights $\lambda$ with $r(\lambda)=0$.
\begin{Prop}\label{soc for rlambda=0}
Let $\lambda\in X(T)_+$. Then we obtain
\[L(p\lambda,G(n,1))\cong P_1^{\ast}L(\lambda)\]
and for $r\geq 2$ we get
\[L(p\lambda,G(n,r))\cong T_r^{\ast}L(\lambda,G(n,r-1)^{(1)})\]
\end{Prop}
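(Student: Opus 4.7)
The first claim, for $r=1$, was already isolated in the remark following Lemma~\ref{Tensor Id for I}: for any $r$-triangulation $H=H^-H^0H^+$ with $H^0$ split reductive and $\lambda\in X(T)_+$, one has $L(p^r\lambda,H)\cong P_r^{\ast}L(\lambda)$. The plan is simply to apply this to $H=G(n,1)$ with $r=1$, noting that $G(n,1)=G^-G^0G^+$ is a $1$-triangulation with $G^0=\GL_n$ split reductive, so the statement $L(p\lambda,G(n,1))\cong P_1^{\ast}L(\lambda)$ follows immediately.

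For $r\geq 2$, the strategy is to identify
\[M:=T_r^{\ast}L(\lambda,G(n,r-1)^{(1)})\]
as the irreducible $G(n,r)$-representation $L(p\lambda,G(n,r))$ by means of Proposition~\ref{prop: param of irred H rep for triang}. First, Lemma~\ref{Tr surjective} gives that $T_r$ induces an isomorphism $G(n,r)/\Ker(T_r)\cong G(n,r-1)^{(1)}$, so the pullback $T_r^{\ast}$ preserves irreducibility; hence $M$ is an irreducible $G(n,r)$-representation.

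Next I would compute $M^{G^-}$ as a $G^0=\GL_n$-representation. By the triangulated structure of $T_r$ (Section~\ref{section:transfer homs}), the restriction $(T_r)^-$ is the first Frobenius $F^1:\mathbb{G}_{a,r}^n\to(\mathbb{G}_{a,r-1}^n)^{(1)}$, which is surjective onto $(G(n,r-1)^{(1)})^-$. Therefore a vector in $M$ is $G^-$-invariant iff it is $(G(n,r-1)^{(1)})^-$-invariant in $L(\lambda,G(n,r-1)^{(1)})$; by the consequence of Lemma~\ref{desc of I-functor} recorded just after its statement, this invariant subspace is $L(\lambda)$ as a $(\GL_n)^{(1)}$-representation. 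Since $(T_r)^0=F^1_{\GL_n}$, pulling back along $T_r$ produces the Frobenius twist $L(\lambda)^{[1]}\cong L(p\lambda)$ as a $\GL_n$-representation. Applying Proposition~\ref{prop: param of irred H rep for triang} in the other direction, the irreducible $G(n,r)$-representation $M$ must equal $L(p\lambda,G(n,r))$.

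The only step with any real content is the computation of $M^{G^-}$, which rests on the surjectivity of $(T_r)^-$ and the classical identity $L(\lambda)^{[1]}\cong L(p\lambda)$ for dominant weights of $\GL_n$; everything else is a direct invocation of Lemmas~\ref{Tr surjective} and~\ref{desc of I-functor} together with Proposition~\ref{prop: param of irred H rep for triang}.
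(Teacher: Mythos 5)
Your proposal is correct and follows essentially the same route as the paper's proof. For $r=1$ you cite the remark after Lemma~\ref{Tensor Id for I} (the paper cites Proposition~\ref{analogue of Steinbergs TPT}, but that remark is exactly the specialization being used), and for $r\geq 2$ your argument via the surjectivity of $T_r$ (Lemma~\ref{Tr surjective}), the computation of the $G^-$-invariants of $T_r^{\ast}L(\lambda,G(n,r-1)^{(1)})$ using the triangulated structure of $T_r$ and $L(\lambda)^{[1]}\cong L(p\lambda)$, and then Proposition~\ref{prop: param of irred H rep for triang}, is precisely the discussion the paper gives immediately after Lemma~\ref{Tr surjective}. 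The only nuance is that the $(G(n,r-1)^{(1)})^-$-invariants of $L(\lambda,G(n,r-1)^{(1)})$ being exactly $L(\lambda)$ is Proposition~\ref{prop: param of irred H rep for triang} applied to the socle, not quite the remark after Lemma~\ref{desc of I-functor} (which computes invariants of $\I(V)$, not of $\soc\I(V)$), but you invoke the proposition anyway, so the argument is sound.
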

\begin{proof}
The claim for $r=1$ is just a special case of Proposition \ref{analogue of Steinbergs TPT} and the claim 
for $r\geq 2$ follows by the discussion after Lemma \ref{Tr surjective}.
\end{proof}

The idea for the case $r(\lambda)\neq 0$ is the following: The $G(n,r)^-$-invariants of the socle 
of $\I(L(\lambda))=L(\lambda)\otimes R(n,r)$ are $L(\lambda)\otimes k\cong L(\lambda)$. That is, the socle is generated 
by this subspace as a $G(n,r)$-representation. 
\begin{Not}
For $\lambda\in X_1'(T)$ write $\lambda=\sum_{i=1}^n m_i(\epsilon_1+\ldots+\epsilon_i)$ where $0\leq m_i<p$
and consider the $\GL_n$-representation
\[\Sym^{m_1}(U)\otimes_k \Sym^{m_2}(\Lambda^2 U)\otimes_k \ldots\otimes_k \Sym^{m_n}(\Lambda^n U)\]
where $U=k^n$ with canonical basis $e_1,\ldots,e_n$. Now consider the vector
\[v(\lambda)=e_1^{m_1}\otimes(e_1\wedge e_2)^{m_2}\otimes\ldots\otimes (e_1\wedge\ldots\wedge e_n)^{m_n}\]
in this representation. We define $W(\lambda)$ to be the $\GL_n$-subrepresentation generated by this vector.

For a general $\lambda\in X(T)_+$ set
\[W(\lambda):=W(r(\lambda))\otimes L(s(\lambda))^{[1]}\] 
\end{Not}
Note that $W(r(\lambda))$ has highest weight $r(\lambda)$ of multiplicity $1$. That is, there is a subrepresentation 
$V\subset W(r(\lambda))$ such that
\[L(r(\lambda)) \cong W(r(\lambda))/V\]
Thus
\[L(\lambda)\cong W(\lambda)/(V\otimes_k L(s(\lambda))^{[1]})\]
by Steinberg's Tensor Product Theorem. For example if $\lambda=\varepsilon_1+\ldots+\varepsilon_i$ is a fundamental weight, then
\[L(\varepsilon_1+\ldots+\varepsilon_i)=\Lambda^i U=W(\lambda)\]
As $\I$ is exact, we get 
\[\I(L(\lambda))=\I(W(\lambda))/\I(V\otimes_k L(s(\lambda))^{[1]})\]
Since $v(r(\lambda))$ is a highest weight vector, it follows that 
\begin{eqnarray*}
\soc\I(L(\lambda))
&=&G(n,r)L(\lambda)\\
&=&G(n,r)(v(r(\lambda))\otimes_k L(s(\lambda))^{[1]})/\I(V\otimes_k L(s(\lambda))^{[1]}) 
\end{eqnarray*}
Finally, note that
\begin{eqnarray*}
 &&\I(\Sym^{m_1}(U)\otimes_k\Sym^{m_2}(\Lambda^2 U)\otimes_k\ldots\otimes_k
\Sym^{m_n}(\Lambda^n U))\\
&=&R(n,r)\otimes_k\Sym^{m_1}(U)\otimes_k\Sym^{m_2}(\Lambda^2
U)\otimes_k\ldots\otimes_k \Sym^{m_n}(\Lambda^n U)\\
&\cong&\Sym_{R(n,r)}^{m_1}(\Omega_r^1)\otimes_{R(n,r)}\Sym_{R(n,r)}^{m_2}
(\Omega_r^2)\otimes_{R(n,r)}\ldots\otimes_{R(n,r)}
\Sym_{R(n,r)}^{m_n}(\Omega_r^n)
\end{eqnarray*}
as $G(n,r)$-representations and for $\lambda=r(\lambda)$, the vector $v(\lambda)$ corresponds to
\[v=(dx_1)^{m_1}\otimes(dx_1\wedge dx_2)^{m_2}\otimes\ldots\otimes (dx_1\wedge\ldots dx_n)^{m_n}\]

In order to compute $G(n,r)v\subset \I(W(\lambda))$, we will use the Lie algebra operators $\delta_{(i,x^I)}$. 
The following Proposition describes their action on $\I(V)$.
\begin{Lem}\label{Lie action on I(V)}
Let $\delta_{(i,x^I)}\in\Lie G(n,r)$ be a canonical basis element. Then for all $\GL_n$-representations $V$, 
the induced action on
\[\I(V)=R(n,r)\otimes_k V\]
reads as 
\[\delta_{(i,x^I)}(f\otimes v)=
\left(x^I\frac{\partial}{\partial x_i}f\right)\otimes v +\sum_{j=1}^nf\frac{\partial}{\partial x_j}x^I\otimes E_{ji}(v)\]
where $E_{ji}\in \mathrm{M}_n(k)=\Lie\GL_n$ is the $(j,i)$-th standard matrix.
\end{Lem}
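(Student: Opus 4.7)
The plan is to differentiate the explicit formula for the $G(n,r)$-action on $\I(V) = R(n,r)\otimes_k V$ from Example~\ref{Gnr-action on I}, namely $g(f\otimes v) = J_g\cdot(g(f)\otimes v)$, in the direction of $\delta = \delta_{(i,x^I)}$. That is, I would lift $\delta$ to a dual-number valued automorphism $g_\epsilon \in G(n,r)(k[\epsilon]/(\epsilon^2))$, expand both $g_\epsilon(f)$ and $J_{g_\epsilon}$ to first order in $\epsilon$, substitute into this formula, and read off the coefficient of $\epsilon$ on each side.

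Concretely, $\delta$ corresponds to the automorphism given on generators by $g_\epsilon(x_\ell) = x_\ell + \epsilon\,\delta_{\ell i}\,x^I$; this is a well-defined element of $G(n,r)(k[\epsilon]/(\epsilon^2))$ since the $p^r$-th power of each $g_\epsilon(x_\ell)(0)$ vanishes in $k[\epsilon]/(\epsilon^2)$ and $J_{g_\epsilon} \equiv I \pmod{\epsilon}$ is invertible. The derivation property then forces $g_\epsilon(f) = f + \epsilon\,x^I\,\partial f/\partial x_i$ modulo $\epsilon^2$ for every $f \in R(n,r)$. Differentiating the generators yields the Jacobian $J_{g_\epsilon} = I + \epsilon\,M$ with $M = \sum_{j=1}^n (\partial x^I/\partial x_j)\,E_{ji}$. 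Because $\GL_n(R(n,r))$ acts on $R(n,r)\otimes_k V$ by $R(n,r)$-linear scalar extension of the $\GL_n$-action on $V$, we get $M\cdot(f\otimes v) = \sum_j f\,(\partial x^I/\partial x_j)\otimes E_{ji}(v)$. Expanding $(I+\epsilon M)\bigl((f+\epsilon\,x^I\,\partial f/\partial x_i)\otimes v\bigr)$ modulo $\epsilon^2$ and reading off the $\epsilon$-coefficient yields the two summands in the statement: one from differentiating $g_\epsilon(f)$, the other from differentiating $J_{g_\epsilon}$.

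The only real pitfall is the index bookkeeping around the transpose convention: the paper defines $(J_g)_{ij} = \partial g(x_j)/\partial x_i$, placing the index of the variable being differentiated in the \emph{first} slot. It is precisely this convention that turns a naive $E_{ij}$ into the $E_{ji}$ appearing in the claimed formula. Once the transpose is tracked carefully, the rest of the proof is a one-line linearization.
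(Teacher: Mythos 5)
Your proof is correct and takes essentially the same route as the paper: lift $\delta_{(i,x^I)}$ to the dual-number point $g_\epsilon \in G(n,r)(k[\epsilon])$, expand the action formula of Example~\ref{Gnr-action on I} to first order, and read off the $\epsilon$-coefficient, with the transpose convention $(J_g)_{ij}=\partial g(x_j)/\partial x_i$ accounting for the $E_{ji}$. One small notational caution: the matrix in Example~\ref{Gnr-action on I} is $\bigl(\partial g(x_j)/\partial x_i\bigr)_{ij}\in\GL_n(R(n,r)_A)$, not the constant Jacobian $J_g\in\GL_n(A)$, so writing $J_{g_\epsilon}$ for it is a slight abuse — but you handle its $R(n,r)$-linear action correctly, so the argument goes through.
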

\begin{proof}
First recall the $G(n,r)$-action on $\I(V)=R(n,r)\otimes_k V$:
\[g(f\otimes v)=\left( \frac{\partial g(x_s)}{\partial x_k}\right )_{ks}(g(f)\otimes v)\]
for all $g\in G(n,r)$, $f\in R(n,r)$, and $v\in V$.

In order to compute the action of $\delta_{(i,x^I)}=x^I\frac{\partial}{\partial x_i}$, we consider the corresponding element 
$g_{(i,I)}=1+\delta_{(i,x^I)}\epsilon\in G(n,r)(k[\epsilon])$ 
where $k[\epsilon]$ are the dual numbers. That is, 
\[g_{(i,I)}(x_s)=\begin{cases}
                  x_s + x^I\epsilon&s=i\\
                  x_s&s\neq i
                 \end{cases}\]
Now we get
\[\delta_{(i,x^I)}(f\otimes v)=\frac{\partial}{\partial\epsilon}\left(\left( \frac{\partial g_{(i,I)}(x_s)}{\partial x_k}\right )_{ks}(g_{(i,I)}(f)\otimes v)\right)\Big |_{\epsilon=0}\]
The product rule provides
\[\delta_{(i,x^I)}(f\otimes v)=\delta_{(i,x^I)}(f)\otimes v + 
f\frac{\partial}{\partial\epsilon}\left(\left( \frac{\partial g_{(i,I)}(x_s)}{\partial x_k}\right )_{ks}(1\otimes v)\right)\Big |_{\epsilon=0}\]
As 
\[\frac{\partial g_{(i,I)}(x_s)}{\partial x_k }=\begin{cases}
                  1 + \frac{\partial x^I}{\partial x_k}\epsilon&s=i=k\\
                  \frac{\partial x^I}{\partial x_k}\epsilon&s=i\neq k\\
                  1&s=k\neq i\\
                  0&s\neq k,s\neq i
                 \end{cases}\]
we get
\[\frac{\partial}{\partial\epsilon}\left(\frac{\partial}{\partial x_k }g_{(i,I)}(x_s) \right)\Big |_{\epsilon=0}
=\begin{cases}
\frac{\partial x^I}{\partial x_k}&s=i\\
0&s\neq i                                                                                                   
\end{cases}
\]
Whence the claim.
\end{proof}
\begin{Rem}
Note that the action of $\delta_{(i,x^I)}$ on $W(\lambda)=W(r(\lambda))\otimes L(s(\lambda))^{[1]}$ is 
$(-)\otimes_k\id_{L(s(\lambda)^{[1]}}$ applied to the action on $W(r(\lambda))$ as $\Lie\GL_n$ acts trivially 
on Frobenius twists $V^{[1]}$.
\end{Rem}
Now we are ready to treat the case where the $\mathrm{mod}\ p$-reduction of $\lambda$ is a 
fundamental weight $\varepsilon_1+\ldots+\varepsilon_i$.
By Steinberg's Tensor Product Theorem, we know that
\[L(\lambda)\cong L(\epsilon_1+\ldots+\epsilon_i)\otimes_k
L(s(\lambda))^{[1]}\cong\Lambda^i U\otimes_k L(s(\lambda))^{[1]}=W(\lambda)\]
with $U=k^n$. That is, 
\[\I(L(\lambda))\cong \Omega_r^i\otimes_k L(s(\lambda))^{[1]}\]
which is part of the twisted de Rham complex as introduced at the end of the previous section. Recall that the 
differentials read as
\[\Omega_r^{i-1}\otimes_k L(s(\lambda))^{[1]}\xrightarrow{d_i\otimes\id}\Omega_r^i\otimes_k L(s(\lambda))^{[1]}\]
where $d_i:\Omega_r^{i-1}\rightarrow\Omega_r^i$ is the de Rham-differential.
\begin{Prop}\label{soc for rlambda=fw}
Let $\lambda\in X(T)_+$ with $r(\lambda)=\epsilon_1+\ldots+\epsilon_i$, then 
\[L(\lambda,G(n,r))\cong \soc(\Omega_r^i\otimes_k
L(s(\lambda))^{[1]})=\im(d_i\otimes \id)\]
where $d_i:\Omega_r^{i-1}\rightarrow\Omega_r^i$ is the de Rham-differential.
\end{Prop}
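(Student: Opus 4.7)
The plan is to prove the two inclusions $L(\lambda,G(n,r))\subseteq\im(d_i\otimes\id)$ and $\im(d_i\otimes\id)\subseteq L(\lambda,G(n,r))$ inside the identification $\I(L(\lambda))\cong\Omega_r^i\otimes_k L(s(\lambda))^{[1]}$. Since $r(\lambda)=\epsilon_1+\dots+\epsilon_i$, Steinberg's Tensor Product Theorem gives $L(\lambda)=\Lambda^i U\otimes_k L(s(\lambda))^{[1]}$, which under the above identification sits inside $\Omega_r^i\otimes_k L(s(\lambda))^{[1]}$ as the $k$-span of $dx_{j_1}\wedge\dots\wedge dx_{j_i}\otimes w$; and by Proposition \ref{prop: param of irred H rep for triang}, $L(\lambda,G(n,r))=G(n,r)\cdot L(\lambda)$.

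The forward inclusion is immediate from
\[dx_{j_1}\wedge\dots\wedge dx_{j_i}\otimes w=(d_i\otimes\id)(x_{j_1}\,dx_{j_2}\wedge\dots\wedge dx_{j_i}\otimes w),\]
which places all of $L(\lambda)$ into $\im(d_i\otimes\id)$; as the image is $G(n,r)$-stable, it contains $G(n,r)\cdot L(\lambda)=L(\lambda,G(n,r))$.

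For the reverse inclusion, my strategy is to argue via the Lie algebra: as $L(\lambda,G(n,r))$ is a $G(n,r)$-submodule and hence $\Lie G(n,r)$-stable, it suffices to show $\im(d_i\otimes\id)\subseteq\Lie G(n,r)\cdot L(\lambda)$. Given a multi-index $I$ and an $(i-1)$-subset $J'=\{j_1<\dots<j_{i-1}\}\subset\{1,\dots,n\}$, pick any $k\in\{1,\dots,n\}\setminus J'$ (possible for $1\le i\le n$, the only nontrivial range) and apply Lemma \ref{Lie action on I(V)} to the derivation $\delta_{(k,x^I)}$ acting on $dx_k\wedge dx_{j_1}\wedge\dots\wedge dx_{j_{i-1}}\otimes w\in L(\lambda)$: the first summand of the formula involves $\frac{\partial 1}{\partial x_k}=0$ and drops out, while the second simplifies because $L(s(\lambda))^{[1]}$ carries trivial $\Lie\GL_n$-action (as a Frobenius twist) and, since $k\notin J'$, one has $E_{jk}(e_k\wedge e_{j_1}\wedge\dots\wedge e_{j_{i-1}})=e_j\wedge e_{j_1}\wedge\dots\wedge e_{j_{i-1}}$. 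The resulting sum
\[\delta_{(k,x^I)}(dx_k\wedge dx_{j_1}\wedge\dots\wedge dx_{j_{i-1}}\otimes w)=\Bigl(\sum_j\frac{\partial x^I}{\partial x_j}\,dx_j\wedge dx_{j_1}\wedge\dots\wedge dx_{j_{i-1}}\Bigr)\otimes w\]
equals $d_i(x^I\,dx_{j_1}\wedge\dots\wedge dx_{j_{i-1}})\otimes w$. Since elements of this form span $\im(d_i\otimes\id)$, the reverse inclusion follows. The main task is thus the Lie algebra computation, in which the hypothesis $k\notin J'$ is what selects the single surviving matrix-unit summand so that the remaining sum reassembles precisely the de Rham differential.
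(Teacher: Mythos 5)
Your proof is correct and follows the same route as the paper: the forward inclusion comes from exhibiting $L(\lambda)=\Lambda^i U\otimes L(s(\lambda))^{[1]}$ as lying in the $G(n,r)$-stable image of $d_i\otimes\id$, and the reverse inclusion uses exactly the paper's Lie-algebra calculation, applying $\delta_{(k,x^I)}$ for $k\notin\{j_1,\dots,j_{i-1}\}$ and invoking Lemma~\ref{Lie action on I(V)} (with the Frobenius twist killing the $\Lie\GL_n$-action on the $L(s(\lambda))^{[1]}$ factor) to reassemble the de Rham differential. The only stylistic difference is that you supply an explicit preimage $x_{j_1}\,dx_{j_2}\wedge\dots\wedge dx_{j_i}\otimes w$ in the forward direction where the paper just asserts the generators lie in the image.
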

\begin{proof}
We will use $\Lie(G(n,r))$-operators to prove the claim. Recall that $f\in\Lie G(n,r)$ acts on 
$\Omega_r^i\otimes_kL(s(\lambda))^{[1]}$ as $f\otimes\id$.

We already noticed that the socle is generated
by the $G(n,r)^-$-invariants as a $G(n,r)$-representation. A generating system of
these invariants is given by
\[(dx_{j_1}\wedge\ldots\wedge dx_{j_i})\otimes v\]
for all $j_1<\ldots<j_i$ and $v\in L(s(\lambda))^{[1]}$. Now the inclusion 
\[\soc(\Omega_r^i\otimes L(s(\lambda))^{[1]})\subset \im(d_i\otimes\id)\] 
follows from the fact that the generators lie in the image of $d_i\otimes\id$. For the inclusion $\im(d_i\otimes\id)\subset \soc(\Omega_r^i\otimes
L(s(\lambda))^{[1]})$ note that $\im(d_i\otimes\id)$ is as a $k$-vector space
generated by the elements
\[(dx^I\wedge dx_{j_1}\wedge\ldots\wedge dx_{j_{i-1}})\otimes v\]
where $v\in L(s(\lambda))^{[1]})$, $x^I=x_1^{m_1}\cdots x_n^{m_n}\in R(n,r)$, and
$j_1<\ldots<j_{i-1}$. As $i-1<n$, there is an index
$l\notin\{j_1,\ldots,j_{i-1}\}$. Then we get that the Lie algebra operator
$\delta_{(l,x^I)}\in \Lie(G(n,r))$ acts as
\[\delta_{(l,x^I)}((dx_{l}\wedge dx_{j_1}\wedge\ldots\wedge dx_{j_{i-1}})\otimes
v)=(dx^I\wedge dx_{j_1}\wedge\ldots\wedge dx_{j_{i-1}})\otimes v\]
according to Lemma \ref{Lie action on I(V)}. This provides all image elements
from the generators. 
\end{proof}
The next Proposition covers the case where the $\mathrm{mod}\ p$-reduction of $\lambda$ is neither $0$ nor a 
fundamental weight if we assume
$\Char(k)\neq 2$.
\begin{Prop}\label{soc for rlambda not 0 or fw}
Assume that $\Char(k)\neq 2$. Let $\lambda\in X(T)_+$ a dominant weight with
$r(\lambda)\neq 0$ and $r(\lambda)\neq\epsilon_1+\ldots+\epsilon_i$ for all
$i=1,\ldots,n$. Then
\[L(\lambda,G(n,r))=\I(L(\lambda))\]
\end{Prop}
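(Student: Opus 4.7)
The plan is to show directly that $\I(L(\lambda))=R(n,r)\otimes_k L(\lambda)$ is itself irreducible by exhibiting the entire module as the $G(n,r)$-submodule generated by its subspace of $G(n,r)^-$-invariants, $1\otimes L(\lambda)$. Since by Proposition~\ref{prop: param of irred H rep for triang} this submodule equals $L(\lambda,G(n,r))=\soc\I(L(\lambda))$, such an equality forces $\soc\I(L(\lambda))=\I(L(\lambda))$ and hence irreducibility.

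The key tool, as in the proof of Proposition~\ref{soc for rlambda=fw}, is the explicit Lie algebra action described in Lemma~\ref{Lie action on I(V)}. First I would produce the degree-one monomials. The identity
\[\delta_{(i,x_j^2)}(1\otimes w)=2\,x_j\otimes E_{ji}(w),\]
with the factor $2$ invertible precisely because $\Char(k)\neq 2$, combined with the fact that for $r(\lambda)\neq 0$ the Lie algebra $\Lie\GL_n$ acts non-trivially on the factor $L(r(\lambda))$ of $L(\lambda)=L(r(\lambda))\otimes L(s(\lambda))^{[1]}$, and with the full $G^0=\GL_n$-action on $L(s(\lambda))^{[1]}$, already produces the whole $\sum_j x_j\otimes L(\lambda)$.

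Next I would run an induction on the degree of the monomial. The general formula
\[\delta_{(i,x^I)}(x^K\otimes w)=\bigl(x^I\partial_i x^K\bigr)\otimes w+\sum_j x^K\,\partial_j x^I\otimes E_{ji}(w)\]
applied with $|I|=2$ yields new monomials of degree $|K|+1$ together with lower-degree contributions already in the span. Systematically combining operators $\delta_{(i,x_j^2)}$ and $\delta_{(i,x_jx_k)}$ applied to elements $x^K\otimes w$ obtained at the previous stage should give every $x^J\otimes L(\lambda)$.

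The main obstacle lies in controlling the passage to the quotient $\I(W(\lambda))\twoheadrightarrow\I(L(\lambda))$: the monomials produced must be verified to remain non-zero modulo $\I(V\otimes L(s(\lambda))^{[1]})$, which is precisely where the hypothesis on $r(\lambda)$ enters. When $r(\lambda)=\epsilon_1+\ldots+\epsilon_i$, one has $W(r(\lambda))=L(r(\lambda))=\Lambda^iU$, so $\I(L(\lambda))=\Omega_r^i\otimes L(s(\lambda))^{[1]}$ and Proposition~\ref{soc for rlambda=fw} shows the socle is the proper submodule $\im(d_i\otimes\id)$. In the remaining case the factorization $W(r(\lambda))=\Sym^{m_1}(U)\otimes\ldots\otimes\Sym^{m_n}(\Lambda^nU)$ has at least one $m_s\geq 2$ or at least two non-zero $m_s$'s, providing enough weight-space freedom transverse to the kernel $V\otimes L(s(\lambda))^{[1]}$ for the induction to close up. The characteristic hypothesis $\Char(k)\neq 2$ enters only to keep $\delta_{(i,x_j^2)}(1\otimes w)$ non-zero; in characteristic $2$ this cheap way to produce the $x_j$-degree terms disappears, which is presumably why this case must be excluded.
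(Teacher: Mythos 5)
Your high-level strategy matches the paper's—generate $\I(L(\lambda))$ from the $G(n,r)^-$-invariants by applying the Lie-algebra operators $\delta_{(i,x^I)}$ of Lemma~\ref{Lie action on I(V)}, and you correctly identify that the factor $2$ coming from $\delta_{(i,x_j^2)}$ is where $\Char(k)\neq 2$ enters. But there are two substantive gaps. First, you propose to work directly in $\I(L(\lambda))$, and you flag ``controlling the passage to the quotient'' as the main obstacle. This is backwards. The paper works in $\I(W(\lambda))$ and proves the stronger statement $G(n,r)v(\lambda)=\I(W(\lambda))$ for the explicit highest-weight vector $v(\lambda)$; once this is done, surjectivity of $\I(W(\lambda))\twoheadrightarrow\I(L(\lambda))$ makes the conclusion for $L(\lambda)$ automatic, so the quotient is not an obstacle at all. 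The real reason the detour through $W(\lambda)$ matters is that $v(\lambda)\in W(\lambda)=\Sym^{m_1}(U)\otimes\cdots\otimes\Sym^{m_n}(\Lambda^nU)\otimes L(s(\lambda))^{[1]}$ admits the explicit $E_{ji}$-computations of Lemma~\ref{L3}, which $L(\lambda)$ does not afford.

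Second, your inductive scheme is not workable as stated. Applying $\delta_{(i,x_j^2)}$ or $\delta_{(i,x_jx_k)}$ to $x^K\otimes w$ produces a sum of higher-degree terms with various tensor factors $E_{ji}(w)$, and there is no visible mechanism to isolate a single monomial $x^J\otimes v(\lambda)$ from these sums while climbing up in degree; even at degree one, passing from ``$\Lie\GL_n$ acts non-trivially on $L(r(\lambda))$'' to ``we obtain all of $U\otimes L(\lambda)$'' is not justified, since the $\GL_n$-action on $U\otimes L(\lambda)$ is diagonal. The paper's argument runs in the opposite direction: Lemma~\ref{L1} reduces to producing the specific monomials $x_1^{ps_1-1}\cdots x_n^{ps_n-1}v(\lambda)$, which is then done by \emph{descending} induction on $s_k$ (respectively $s_i+s_k$), with base case $s_k=p^{r-1}$ trivial because the monomial vanishes. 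The split into the cases $m_k\geq 2$ and $m_k=1$—the latter using the non-fundamental hypothesis to extract a second nonzero $m_i$—together with the precise eigenvalue computations of Lemma~\ref{L3} is what makes the cancellations close up. None of this structure is present in your sketch, so the proof as proposed does not go through.
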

Before we give the proof, we need some technical Lemmas.
\begin{Lem}\label{L1}
Let $V$ be $\GL_n$-representation, $v\in V$, and $1\leq s_j\leq p^{r-1}$. If 
\[x_1^{ps_1-1}\cdots x_n^{ps_n-1}v\in G(n,r)v\subset R(n,r)\otimes_k V=\I(V)\]
then 
\[x^Jv\in G(n,r)v\subset \I(V)\]
for all $J=(j_1,\ldots,j_n)$ with $p(s_k-1)\leq j_k<ps_k$ for all $1\leq
k\leq n$.
\end{Lem}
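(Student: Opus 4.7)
The plan is to move from the given monomial $v_0 := x_1^{ps_1-1}\cdots x_n^{ps_n-1}\otimes v \in G(n,r)v$ to any $x^J\otimes v$ with $J$ in the prescribed range by acting with Lie algebra operators. Since $G(n,r)v \subset \I(V)$ is a $G(n,r)$-subrepresentation, it is stable under the derived action of $\Lie G(n,r) = \Der_k(R(n,r))$, and in particular under the canonical basis elements $\delta_{(k,1)} = \partial/\partial x_k$ obtained by taking the multi-index $I = 0$.

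By Lemma \ref{Lie action on I(V)}, the action of $\delta_{(k,1)}$ on $f \otimes w$ simplifies drastically to $(\partial f/\partial x_k) \otimes w$, since the matrix-valued correction term vanishes identically (each $\partial 1/\partial x_j = 0$). These operators commute pairwise, so composing them gives
\[
\delta_{(1,1)}^{m_1}\cdots\delta_{(n,1)}^{m_n}(v_0)
 = \Bigl(\prod_{k=1}^n\prod_{i=1}^{m_k}(ps_k - i)\Bigr)\, x_1^{ps_1-1-m_1}\cdots x_n^{ps_n-1-m_n}\otimes v \in G(n,r)v
\]
for any choice of nonnegative integers $m_k$.

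To reach a prescribed $J$ with $p(s_k-1)\leq j_k < ps_k$, I would set $m_k := ps_k - 1 - j_k$, so that the hypothesis forces $0 \leq m_k \leq p-1$. The scalar factor then reduces modulo $p$ to $\prod_k (-1)^{m_k} m_k!$, which is a unit in $k$ since each $m_k < p$, and dividing by it yields $x^J\otimes v \in G(n,r)v$ as claimed. The computation itself is mechanical; the only piece of real content is this mod-$p$ non-vanishing, and the lower bound $j_k \geq p(s_k-1)$ in the hypothesis is precisely the sharp condition needed to keep each $m_k$ strictly below $p$ so that no Lucas-type cancellation spoils the argument.
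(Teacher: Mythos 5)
Your proposal is correct and takes essentially the same approach as the paper, which simply cites the gradual application of the operators $\delta_i = \partial/\partial x_i \otimes \id$; you have merely worked out explicitly the scalar $\prod_k \prod_{i=1}^{m_k}(ps_k - i) \equiv \prod_k (-1)^{m_k} m_k! \not\equiv 0 \pmod p$ that the paper leaves to the reader.
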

\begin{proof}
This follows by the gradual application of the Lie algebra operators
\[\delta_{i}=\frac{\partial}{\partial x_i}\otimes\id:R(n,r)\otimes
V\rightarrow R(n,r)\otimes V\qedhere\]
\end{proof}
\begin{Lem}\label{L2}
Let $V$ be a $\GL_n$-representation, $v\in V$, and $1\leq s_j\leq p^{r-1}$. Assume that $x^Jv\in G(n,r)v\subset R(n,r)\otimes_k V=\I(V)$ 
for $J=(j_1,\ldots,j_n)$.
\begin{enumerate}
 \item If $j_k=ps$, then for all $j\neq k$ we get
\[x^JE_{jk}v\in G(n,r)v\subset\I(V)\]
\item If $j_i=ps$ and $x_j\frac{\partial}{\partial x_k}x^Jv\in G(n,r)v$ we get
\[x^JE_{ki}E_{jk}v\in G(n,r)v\subset\I(V)\]
\end{enumerate}
\end{Lem}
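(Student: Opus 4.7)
The plan is to apply, in each part, suitable canonical Lie algebra operators $\delta_{(a,x^I)}\in\Lie G(n,r)$ to $x^J\otimes v\in\I(V)=R(n,r)\otimes_k V$ and to expand using the explicit formula of Lemma \ref{Lie action on I(V)}. The crucial underlying point is that the $G(n,r)$-subrepresentation $G(n,r)v$ is automatically stable under the $\Lie G(n,r)$-action, so any element produced by applying such an operator to something already in $G(n,r)v$ remains there.

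For Part (1), I would apply $\delta_{(k,x_j)}$ to $x^J\otimes v$. Lemma \ref{Lie action on I(V)} gives
\[\delta_{(k,x_j)}(x^J\otimes v)=\Bigl(x_j\tfrac{\partial}{\partial x_k}x^J\Bigr)\otimes v+\sum_{l=1}^n x^J\tfrac{\partial x_j}{\partial x_l}\otimes E_{lk}(v),\]
and $\partial x_j/\partial x_l=\delta_{jl}$ collapses the sum to a single term $x^J\otimes E_{jk}(v)$. The first summand carries the scalar factor $j_k=ps\equiv 0$ in characteristic $p$, hence vanishes, leaving the left-hand side equal to $x^J\otimes E_{jk}(v)$, which is therefore in $G(n,r)v$.

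For Part (2), I would proceed in two steps. First, I again apply $\delta_{(k,x_j)}$ to $x^J\otimes v$, but now $j_k$ is not assumed divisible by $p$; instead, the residual first summand $(x_j\,\partial x^J/\partial x_k)\otimes v$ is in $G(n,r)v$ by hypothesis, which lets me solve for $x^J\otimes E_{jk}(v)\in G(n,r)v$. Second, I apply $\delta_{(i,x_k)}$ to $x^J\otimes E_{jk}(v)$, obtaining by the same formula
\[\delta_{(i,x_k)}(x^J\otimes E_{jk}(v))=\Bigl(x_k\tfrac{\partial}{\partial x_i}x^J\Bigr)\otimes E_{jk}(v)+x^J\otimes E_{ki}E_{jk}(v).\]
Here the first summand has the scalar factor $j_i=ps\equiv 0\pmod p$ and vanishes, delivering $x^J\otimes E_{ki}E_{jk}(v)\in G(n,r)v$ as desired.

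No genuine obstacle is expected; the only care needed is the bookkeeping of indices in the formula of Lemma \ref{Lie action on I(V)}, namely keeping the outer derivation index in the tensor slot $i$ of $E_{ji}$ and the monomial subscript in the slot $j$, so that the matrix units $E_{jk}$ and $E_{ki}$ come out in the correct order and with the subscripts matching those in the statement.
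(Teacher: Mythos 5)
Your argument is correct and matches the paper's proof: both parts apply $\delta_{(k,x_j)}$ (and for part (2) also $\delta_{(i,x_k)}$) to elements already known to lie in $G(n,r)v$, invoke the formula of Lemma \ref{Lie action on I(V)}, and use that the residual scalar coefficient $j_k=ps$ (resp.\ $j_i=ps$) vanishes mod $p$. The paper merely compresses the two steps of part (2) into a single displayed chain of equalities; the content is the same.
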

\begin{proof}
The first part follows from
\[x^JE_{jk}v=\delta_{(k,x_j)}(x^Jv)\]
as $j_k=ps$.

The second part follows from
\begin{eqnarray*}
x^JE_{ki}E_{jk}v&=&\delta_{(i,x_k)}(x^JE_{jk}v)\\
&=&\delta_{(i,x_k)}\left(\delta_{(k,x_j)}(x^Jv)-x_j\frac{\partial}{\partial x_k}x^Jv\right) 
\end{eqnarray*}
as $j_i=ps$.
\end{proof}
\begin{Lem}\label{L3}
Let $\lambda\in X_1'(T)$ and write $\lambda=\sum_{i=1}^n m_i(\epsilon_1+\ldots+\epsilon_i)$. 
Let $v=v(\lambda)\in W(\lambda)$ the vector from above. Let $k$ be the highest index, such that $m_k\neq 0$ 
and $i<k$ the highest index such that $m_i\neq 0$.
\begin{enumerate}
 \item For all $j\leq k$, we get
\[E_{jk}v=\delta_{jk}m_kv\]
where $\delta_{jk}$ is the Kronecker-$\delta$.
 \item For $j>k$, we get
\[E_{kk}E_{jk}v=(m_k-1)E_{jk}v\]
\item If $m_k=1$, we get
\[E_{ii}v=(m_i+1)v\]
\item If $m_k=1$, we get for all $j>k$
\[E_{ii}E_{ki}v=m_iE_{ki}v\]
and
\[E_{ii}E_{ki}E_{jk}v=m_iE_{ki}E_{jk}v\]
\end{enumerate}
\end{Lem}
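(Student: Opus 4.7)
The plan is to unpack the action of each matrix unit $E_{ab}$ on the tensor factorization of $v(\lambda)$ via the Leibniz rule, where $E_{ab}$ acts on $\Lambda^l U$ as a derivation sending $e_s \mapsto \delta_{bs}e_a$, and on $\Sym^{m_l}(\Lambda^l U)$ again by the Leibniz rule. All four parts are then routine bookkeeping.

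First I would record that, by the choice of $k$ (highest nonzero $m_l$) and $i$ (next highest below $k$), we have $m_l=0$ for $i<l<k$ and for $l>k$. Hence $v(\lambda)$ factors as
\[
v \;=\; w \cdot (e_1 \wedge \ldots \wedge e_i)^{m_i} \cdot (e_1 \wedge \ldots \wedge e_k)^{m_k},
\]
where $w \in \bigotimes_{l<i}\Sym^{m_l}(\Lambda^l U)$ involves only $e_1,\ldots,e_{i-1}$ and hence contains neither $e_i$ nor $e_k$. Also, $E_{ii}$ and $E_{kk}$ act diagonally on any monomial in the $e_s$'s as multiplication by the total number of $e_i$'s (resp.\ $e_k$'s) appearing.

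For part (1), the factor $w$ is killed by $E_{jk}$ (no $e_k$), and so is $(e_1\wedge\ldots\wedge e_i)^{m_i}$ since $i<k$. On the remaining factor, $E_{jk}$ replaces one $e_k$ by $e_j$: if $j<k$ the resulting wedge has a repeated $e_j$ and vanishes, while $j=k$ simply gives $m_k\cdot v$ by the symmetric-power Leibniz rule. For part (2) the same analysis yields
\[
E_{jk}v \;=\; m_k\, w\cdot (e_1\wedge\ldots\wedge e_i)^{m_i}\cdot (e_1\wedge\ldots\wedge e_{k-1}\wedge e_j)\cdot (e_1\wedge\ldots\wedge e_k)^{m_k-1},
\]
and the new wedge $(\ldots\wedge e_j)$ contains no $e_k$ (as $j>k$), so the total multiplicity of $e_k$ in $E_{jk}v$ is $m_k-1$, giving the claim.

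For part (3), with $m_k=1$, the factors contributing $e_i$'s to $v$ are $(e_1\wedge\ldots\wedge e_i)^{m_i}$ and the single $(e_1\wedge\ldots\wedge e_k)$ (since $i<k$), for a total of $m_i+1$. For part (4), one first computes $E_{ki}v$: the factor $(e_1\wedge\ldots\wedge e_k)$ is killed (replacing $e_i$ by $e_k$ duplicates $e_k$), leaving $E_{ki}v = m_i\,w\cdot (e_1\wedge\ldots\wedge e_{i-1}\wedge e_k)\cdot (e_1\wedge\ldots\wedge e_i)^{m_i-1}\cdot (e_1\wedge\ldots\wedge e_k)$. Counting $e_i$'s: $m_i-1$ from $(e_1\wedge\ldots\wedge e_i)^{m_i-1}$ plus $1$ from $(e_1\wedge\ldots\wedge e_k)$, giving $m_i$. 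For $E_{ii}E_{ki}E_{jk}v$, the Leibniz rule makes $E_{ki}$ produce two terms on $E_{jk}v$: one hitting $(e_1\wedge\ldots\wedge e_i)^{m_i}$, the other hitting $(e_1\wedge\ldots\wedge e_{k-1}\wedge e_j)$. In both terms the $e_i$-count is $m_i$ --- in the first, $m_i-1$ from the surviving symmetric power plus $1$ from the $(\ldots\wedge e_j)$ wedge; in the second, $m_i$ from the untouched symmetric power and $0$ from the wedge where $e_i$ has been replaced by $e_k$. The main potential pitfall is exactly this second Leibniz expansion in part (4), where one has to be careful not to drop either term. No genuine obstacle arises, just disciplined combinatorial counting.
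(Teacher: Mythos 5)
Your proof is correct, and all the multiplicity counts check out, including the two-term Leibniz expansion in part (4), which is indeed the one place to be careful. The paper takes the same starting point (the explicit tensor factorization of $v(\lambda)$ and the derivation action of matrix units), and its proofs of (1) and (3) are the same direct counts as yours. Where the routes diverge is in (2) and (4): rather than recomputing the $e_k$- or $e_i$-multiplicity of the displaced vectors, the paper deduces (2) from (1) and (4) from (3) using the $\mathfrak{gl}_n$-commutation relations, e.g.\ $E_{kk}E_{jk}v = E_{jk}E_{kk}v + [E_{kk},E_{jk}]v$ together with $[E_{kk},E_{jk}]=-E_{jk}$ (as matrix products, $E_{kk}E_{jk}=0$ and $E_{jk}E_{kk}=E_{jk}$), and similarly uses $[E_{ii},E_{jk}]=0$ and $[E_{ii},E_{ki}]=-E_{ki}$ in part (4). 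This packages the bookkeeping you do by hand into a single weight-shift observation and makes (2) and (4) formal consequences of (1) and (3). Your version is more self-contained and spells out the explicit vectors, which is arguably safer given the potential pitfall you flag; the paper's version is shorter and highlights that the structure underlying (2) and (4) is just the adjoint action of the diagonal on the root vectors.
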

\begin{proof}
Recall that
\[v=e_1^{m_1}\otimes(e_1\wedge e_2)^{m_2}\otimes\ldots\otimes (e_1\wedge\ldots e_k)^{m_k}\]
The first claim follows by
\[E_{jk}v=m_ke_1^{m_1}\otimes(e_1\wedge e_2)^{m_2}\otimes\ldots\otimes (e_1\wedge\ldots\wedge e_{k-1}\wedge e_{j})(e_1\wedge\ldots\wedge e_k)^{m_k-1}\]

The second follows from the first by
\[E_{kk}E_{jk}v=E_{jk}E_{kk}v+(E_{kk}E_{jk}-E_{jk}E_{kk})v,\]
using $E_{kk}E_{jk}=0$ and $E_{jk}E_{kk}=E_{jk}$. 

Now let $m_k=1$. Then the third claim follows as $E_{ii}$ acts precisely on the factors $(e_1\wedge\ldots\wedge e_i)^{m_i}$ and $(e_1\wedge\ldots\wedge e_k)$ of $v$.

The claim $E_{ii}E_{ki}v=m_iE_{ki}v$ follows from the third in the same fashion as the second follows from the first.

Finally for the last claim using $E_{ii}E_{jk}=E_{jk}E_{ii}=0$ and the third claim, we get
\[E_{ii}E_{jk}v=E_{jk}E_{ii}v=(m_i+1)E_{jk}v\]
Hence
\begin{eqnarray*}
E_{ii}E_{ki}E_{jk}v&=&E_{ki}E_{ii}E_{jk}v-E_{ki}E_{jk}v\\
&=&m_iE_{ki}E_{jk}v 
\end{eqnarray*}
which finishes the proof.
\end{proof}

\begin{proof}[Proof of \ref{soc for rlambda not 0 or fw}]
By previous discussions, it suffices to prove 
\[G(n,r)(v(r(\lambda))\otimes_k L(s(\lambda))^{[1]})=\I(W(r(\lambda))\otimes_k L(s(\lambda))^{[1]})=\I(W(\lambda))\]
Again we will use $\Lie(G(n,r))$-operators to prove the claim. As again $f\in\Lie G(n,r)$ acts as $f\otimes\id$, 
we can assume that $\lambda=r(\lambda)$.

Let $v=v(\lambda)$ as above. It suffices to show
\[R(n,r)\otimes_k kv(\lambda)\subset G(n,r)v(\lambda)\subset\I(W(\lambda))\]
Now write again
\[\lambda=\sum_{i=1}^n m_i(\epsilon_1+\ldots+\epsilon_i)\]
As $\lambda=r(\lambda)$, we have $0\leq m_i\leq p-1$ for all $i=1,\ldots,n$.

By Lemma \ref{L1}, it suffices to prove
\[x_1^{ps_1-1}\cdots x_n^{ps_n-1}v\in G(n,r)v\]
for all choices $1\leq s_j\leq p^{r-1}$.

By assumption we have $\lambda\neq 0$. That is, there is a highest index $k$ such
that $m_k\neq 0$. 

\begin{Case}[$m_k\geq 2$] Let us assume that $m_k\geq 2$. We argue by descending induction on $s_k$. 

Take $I=(ps_1-1,\ldots,ps_n-1)$. Note that for all $J=(j_1,\ldots,j_n)$ with $j_k\geq ps_k$ we get
\begin{eqnarray}
x^Jv&\in& G(n,r)v\label{IHc1}
\end{eqnarray}
For $s_k=p^{r-1}$ this is clear as in this case $x^J=0$. The case $s_k<p^{r-1}$ follows by the induction hypothesis 
and Lemma \ref{L1}.

By Lemma \ref{Lie action on I(V)} we get
\begin{eqnarray}
\delta_{(k,x^I)}(v)&=&m_k\frac{\partial}{\partial x_k}x^Iv+\sum_{j>k}\frac{\partial}{\partial x_j} x^IE_{jk}v\label{Fc1}
 \end{eqnarray}
since $E_{jk}v=\delta_{jk}m_kv$ for $j\leq k$ by Lemma \ref{L3}(1). 

Now we apply the operator
$\delta_{(k,x_k^2)}$ to $\delta_{(k,x^I)}(v)$. Let $j>k$. Using $E_{kk}E_{jk}v=(m_k-1)E_{jk}v$ Lemma \ref{L3}(2) we obtain
\begin{eqnarray*}
\delta_{(k,x_k^2)}\left(\frac{\partial}{\partial x_j} x^I
E_{jk}v\right)&=&x_k^2\frac{\partial}{\partial x_k}\frac{\partial}{\partial
x_j}x^IE_{jk}v+2x_k\frac{\partial}{\partial x_j} x^IE_{kk}E_{jk}v\\
&=&x_k^2\frac{\partial}{\partial x_k}\frac{\partial}{\partial
x_j}x^IE_{jk}v+2(m_k-1)x_k\frac{\partial}{\partial x_j} x^IE_{jk}v\\
&\in&G(n,r)v
\end{eqnarray*}
by (\ref{IHc1}) and Lemma \ref{L2}(1).

As $\delta_{(k,x_k^2)}(\delta_{(k,x^I)}(v))\in G(n,r)v$, we obtain
\[\delta_{(k,x_k^2)}\left(\frac{\partial}{\partial x_k}x^Iv\right)\in G(n,r)v\]
by (\ref{Fc1}). Using $E_{kk}v=m_kv$, we get
\begin{eqnarray*}
\delta_{(k,x_k^2)}\left(\frac{\partial}{\partial x_k} x^Iv\right)&=&x_k^2\frac{\partial}{\partial x_k}\frac{\partial}{\partial x_k}
x^Iv+2x_k\frac{\partial}{\partial x_k}
x^IE_{kk}v\\
&=&\left(x_k^2\frac{\partial}{\partial x_k}\frac{\partial}{\partial x_k}
x^I+2m_kx_k\frac{\partial}{\partial x_k}
x^I\right)v\\
&=&(ps_k-1)(ps_k-2+2m_k)x^Iv\\
&=&2(1-m_k)x^I v\\&\in& G(n,r)v
\end{eqnarray*}
As $2\leq m_k\leq p-1$ and $\Char(k)=p\neq 2$, we have $2(1-m_k)\neq 0$. Hence
\[x_1^{ps_1-1}\cdots x_n^{ps_k-1} v=x^I v\in G(n,r)v\]
for all choices $1\leq s_j\leq p^{r-1}$ which finishes the proof for the case $m_k\geq
2$.
\end{Case}
\begin{Case}[$m_k=1$]
In the second case, we assume that $m_k=1$. As $r(\lambda)=\lambda$ is not a
fundamental weight by assumption, there is a highest index $i<k$ with $m_i\neq
0$. Here, we will argue by descending induction on $s_i+s_k$.

Take again $I=(ps_1-1,\ldots,ps_n-1)$. Note that for all $J=(j_1,\ldots,j_n)$ with $j_k\geq ps_k\wedge j_i\geq p(s_i-1)$ 
or $j_k\geq p(s_k-1)\wedge j_i\geq ps_i$, we get
\begin{eqnarray}
x^Jv&\in& G(n,r)v\label{IHc2}
\end{eqnarray}
For $s_k=p^{r-1}$ or $s_i=p^{r-1}$, this is clear as in this case $x^J=0$. In the case $j_k\geq ps_k$ and $j_i\geq p(s_i-1)$ 
and $s_k<p^{r-1}$ it follows by the induction hypothesis and Lemma \ref{L1}. The case $j_k\geq p(s_k-1)$ and $j_i\geq ps_i$ is analogous. 

Again by Lemma \ref{Lie action on I(V)}, we obtain
\begin{eqnarray}
\delta_{(k,x^I)}(v)&=&\frac{\partial}{\partial x_k}x^Iv+\sum_{j>k}\frac{\partial}{\partial x_j} x^I
E_{jk}v\label{Fc2}
\end{eqnarray}
as $E_{kk}v=m_kv=v$.
 
For $j>k$ we get
\[x_k\frac{\partial}{\partial x_i}\frac{\partial}{\partial x_j}x^IE_{jk}v\in G(n,r)v\]
by (\ref{IHc2}) and Lemma \ref{L2}(1).

Now we apply the operator $\delta_{(i,x_i^2)}\circ\delta_{(i,x_k)}$ to $\delta_{(k,x^I)}(v)$. For $j>k$, we get
\begin{eqnarray*}
&&\delta_{(i,x_i^2)}\left(\delta_{(i,x_k)}\left(\frac{\partial}{\partial x_j} x^I
E_{jk}v\right)\right)\\
&=&\underbrace{\delta_{(i,x_i^2)}\left(x_k\frac{\partial}{\partial x_i}\frac{\partial}
{\partial x_j}x^IE_{jk}v\right)}_{\in G(n,r)v}+\delta_{(i,x_i^2)}\left(\frac{\partial}{\partial x_j} x^IE_{ki}E_{jk}v\right)\\
\end{eqnarray*}
But using $E_{ii}E_{ki}E_{jk}v=m_iE_{ki}E_{jk}v$ Lemma \ref{L3}(4), we get for $j>k$
\begin{eqnarray*}
\delta_{(i,x_i^2)}\left(\frac{\partial}{\partial x_j} x^IE_{ki}E_{jk}v\right)&=&x_i^2\frac{\partial}{\partial x_i}\frac{\partial}{\partial x_j} x^IE_{ki}E_{jk}v+2x_i\frac{\partial}{\partial x_j} x^IE_{ii}E_{ki}E_{jk}v\\
&=&x_i^2\frac{\partial}{\partial x_i}\frac{\partial}{\partial x_j} x^IE_{ki}E_{jk}v+2m_ix_i\frac{\partial}{\partial x_j} x^IE_{ki}E_{jk}v\\
&\in&G(n,r)v
\end{eqnarray*}
by (\ref{IHc2}) and Lemma \ref{L2}(2).

As $\delta_{(i,x_i^2)}(\delta_{(i,x_k)}(\delta_{(k,x^I)}(v)))\in G(n,r)v$, we obtain
\[\delta_{(i,x_i^2)}\left(\delta_{(i,x_k)}\left(\frac{\partial}{\partial x_k}x^Iv\right)\right)\in G(n,r)v\]
by (\ref{Fc2}). That is, by using 
\[x_k\frac{\partial}{\partial x_i}\frac{\partial}{\partial x_k}x^I=(ps_k-1)\frac{\partial}{\partial x_i}x^I=-\frac{\partial}{\partial x_i}x^I\]
we get
\[\delta_{(i,x_i^2)}\left(-\frac{\partial}{\partial x_i}x^Iv+
\frac{\partial}{\partial x_k}x^IE_{ki}v\right)\in G(n,r)v\]
Using $E_{ii}E_{ki}v=m_iE_{ki}v$ Lemma \ref{L3}(4), we obtain
\begin{eqnarray*}
\delta_{(i,x_i^2)}\left(\frac{\partial}{\partial x_k}x^IE_{ki}v\right)&=&x_i^2\frac{\partial}{\partial x_i}\frac{\partial}{\partial x_k}x^IE_{ki}v+
2x_i\frac{\partial}{\partial x_k}x^IE_{ii}E_{ki}v\\
&=&x_i^2\frac{\partial}{\partial x_i}\frac{\partial}{\partial x_k}x^IE_{ki}v+
2m_ix_i\frac{\partial}{\partial x_k}x^IE_{ki}v\\
&\in&G(n,r)v
\end{eqnarray*}
by (\ref{IHc2}) and Lemma \ref{L2}(1). Hence
\[\delta_{(i,x_i^2)}\left(\frac{\partial}{\partial x_i}x^Iv\right)\in G(n,r)v\]

Using $E_{ii}v=(m_i+1)v$ Lemma \ref{L3}(3), we obtain
\begin{eqnarray*}
\delta_{(i,x_i^2)}\left(\frac{\partial}{\partial x_i}x^Iv\right)&=&x_i^2\frac{\partial}{\partial x_i}\frac{\partial}{\partial x_i}x^Iv+
2x_i\frac{\partial}{\partial x_i}x^IE_{ii}v\\
&=&2x^Iv-
2(m_i+1)x^Iv\\
&=&-2m_ix^Iv\\
&\in&G(n,r)v
\end{eqnarray*}
That is, we get
\[2m_ix^Iv\in G(n,r)v\]
which is nonzero as $\Char(k)=p\neq 2$ and $1\leq m_i\leq p-1$. Hence
\[x_1^{ps_1-1}\cdots x_n^{ps_n-1}v=x^Iv\in G(n,r)v\]
which finishes the proof for $m_k=1$.
\end{Case}
Finally, we proved the Proposition. 
\end{proof}

\section{The Grothendieck Ring}
For a $k$-group scheme $G$ denote by
\[\Rep(G):=K_0(G\mathrm{-rep})\]
the Grothendieck ring of finite dimensional representations. 
This is a free abelian group where a $\Z$-basis is given by the classes of the irreducible representations.

Using the description of irreducible $G(n,r)$-representations of the previous section, we are now ready to describe $\Rep(G(n,r))$.
Recall that the functor $\I$ is exact. 
\begin{Thm}
Assume that $\Char(k)\neq 2$. Then the maps
\[\binom{\I}{P_1^{\ast}}:\Rep(\GL_n)\oplus\Rep((\GL_n)^{(1)})\rightarrow\Rep(G(n,1))\]
and for $r\geq 2$
\[\binom{\I}{T_r^{\ast}}:\Rep(\GL_n)\oplus\Rep(G(n,r-1)^{(1)})\rightarrow\Rep(G(n,r))\]
are surjective morphisms of abelian groups.
\end{Thm}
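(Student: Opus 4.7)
The plan is to show that every irreducible class $[L(\lambda,G(n,r))]$, $\lambda\in X(T)_+$, lies in the image of the displayed map. Since these classes form a $\Z$-basis of $\Rep(G(n,r))$, surjectivity will follow. I split the argument according to the $\mathrm{mod}\ p$-reduction $r(\lambda)\in X_1'(T)$ of $\lambda$.

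When $r(\lambda)=0$, i.e.\ $\lambda=p\mu$, Proposition \ref{soc for rlambda=0} places $[L(p\mu,G(n,1))]$ directly in $\im(P_1^{\ast})$ and, for $r\geq 2$, $[L(p\mu,G(n,r))]$ in $\im(T_r^{\ast})$. When $r(\lambda)$ is nonzero and not of the form $\epsilon_1+\ldots+\epsilon_i$, Proposition \ref{soc for rlambda not 0 or fw} provides $L(\lambda,G(n,r))\cong \I(L(\lambda))$, putting $[L(\lambda,G(n,r))]$ in $\im(\I)$ outright.

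The core step is the fundamental-weight case $r(\lambda)=\epsilon_1+\ldots+\epsilon_i$. Here, by Proposition \ref{soc for rlambda=fw}, $L(\lambda,G(n,r))\cong \im(d_i\otimes\id)$ inside the twisted de Rham complex $\Omega_r^{\bullet}\otimes_k L(s(\lambda))^{[1]}$ constructed at the end of Section 5. Combining the short exact sequences
\[0\to \ker(d_j\otimes\id)\to \Omega_r^{j-1}\otimes_k L(s(\lambda))^{[1]}\to \im(d_j\otimes\id)\to 0\]
and
\[0\to \im(d_{j-1}\otimes\id)\to \ker(d_j\otimes\id)\to H^{j-1}(\Omega_r^{\bullet}\otimes_k L(s(\lambda))^{[1]})\to 0\]
and unwinding downward from $j=i$ to $j=1$ (with the convention $\im(d_0\otimes\id)=0$, so that $\ker(d_1\otimes\id)=H^0$), I express $[L(\lambda,G(n,r))]$ as a $\Z$-linear combination of the classes $[\Omega_r^{k}\otimes_k L(s(\lambda))^{[1]}]$ and $[H^{k}(\Omega_r^{\bullet}\otimes_k L(s(\lambda))^{[1]})]$ for $0\leq k\leq i-1$. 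The first family lies in $\im(\I)$ thanks to the isomorphism $\Omega_r^{k}\otimes_k L(s(\lambda))^{[1]}\cong \I(\Lambda^k U\otimes_k L(s(\lambda))^{[1]})$ (Example \ref{Gnr-action on I} together with Lemma \ref{Tensor Id for I}), and the second family lies in $\im(P_1^{\ast})$ when $r=1$ by Cartier's isomorphism $H^k(\Omega_1^{\bullet}\otimes_k V^{[1]})\cong P_1^{\ast}(\Lambda^k U^{(1)}\otimes_k V)$, respectively in $\im(T_r^{\ast})$ when $r\geq 2$ by the iterated Cartier formula $H^k(\Omega_r^{\bullet}\otimes_k V^{[1]})\cong T_r^{\ast}((\Omega_{r-1}^k)^{(1)}\otimes_k V)$ from the end of Section 5. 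The only delicate point is the de Rham bookkeeping; the alternating-sum manipulation is legitimate because $\I$, $P_1^{\ast}$ and $T_r^{\ast}$ are all exact and thus descend to Grothendieck rings.
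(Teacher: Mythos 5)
Your proposal is correct and follows essentially the same route as the paper: the same three-way case split on $r(\lambda)$, the same Propositions for the zero and non-fundamental cases, and the same de Rham/Cartier argument for the fundamental-weight case. The only cosmetic difference is the direction of the bookkeeping: you unwind the de Rham complex downward from degree $i$ to degree $0$, while the paper recurses upward from degree $n$ down to degree $i$; both yield the same conclusion.
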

\begin{proof}
Recall that a $\Z$-basis of $\Rep(G(n,r))$ is given by $[L(\lambda,G(n,r))]$ with $\lambda\in X(T)_+$. 
That is, it suffices to show that these classes lie in the respective image.

We start with the case $r=1$ and the map $\binom{\I}{P_1^{\ast}}$. For $r(\lambda)=0$, we know by Proposition 
\ref{soc for rlambda=0} that
\[L(\lambda,G(n,1))=P_1^{\ast}L(s(\lambda))\]
which shows the claim. For $r(\lambda)=\varepsilon_1+\ldots+\varepsilon_i$, we know by Proposition 
\ref{soc for rlambda=fw} that
\[L(\lambda,G(n,1))\cong \im(d_i\otimes\id:\Omega_1^{i-1}\otimes_k L(s(\lambda))^{[1]}\rightarrow \Omega_1^i\otimes_k L(s(\lambda))^{[1]})\]
the images in the twisted de Rham complex. We also know that its cohomology computes as
\[H^i(\Omega_1^{\bullet}\otimes_k L(s(\lambda))^{[1]})\cong P_1^{\ast}(\Lambda^iU^{(1)}\otimes_k L(s(\lambda)))\]
That is, the cohomology classes lie in the image of $\binom{\I}{P_1^{\ast}}$. As 
\[\Omega_1^i\otimes_k L(s(\lambda))^{[1]})=\I(\Lambda^i U\otimes_k L(s(\lambda))^{[1]})\]
also the classes of the objects of the complex lie in this image. As
\[[\im(d_n\otimes\id)]=[\Omega_1^n\otimes_k L(s(\lambda))^{[1]})]+[H^n(\Omega_1^{\bullet}\otimes_k L(s(\lambda))^{[1]})]\]
we get the claim for $r(\lambda)=\varepsilon_1+\ldots+\varepsilon_n$. Now let $i<n$. Then
\begin{eqnarray*}
&&[\im(d_i\otimes\id)]\\\
&=&[\Omega_1^i\otimes_k L(s(\lambda))^{[1]})]-[\im(d_{i+1}\otimes\id)]-[H^i(\Omega_1^{\bullet}\otimes_k L(s(\lambda))^{[1]})]
\end{eqnarray*}
inductively provides the claim for $r(\lambda)=\varepsilon_1+\ldots+\varepsilon_i$. Finally, in the case that $r(\lambda)$ 
is neither $0$ nor a fundamental weight, Proposition \ref{soc for rlambda not 0 or fw} provides
\[L(\lambda,G(n,r))=\I (L(\lambda))\]
which finishes the case $r=1$. 

Now let $r\geq 2$. That is, we consider the map $\binom{\I}{T_r^{\ast}}$. Then for $r(\lambda)=0$, we get
\[L(\lambda,G(n,r))=T_r^{\ast}L(s(\lambda),G(n,r-1)^{(1)})\]
by Proposition \ref{soc for rlambda=0} which shows the claim. For the case $r(\lambda)=\varepsilon_1+\ldots+\varepsilon_i$ 
we use the twisted de Rham complex
\[\Omega_r^i\otimes_k L(s(\lambda))^{[1]})=\I(\Lambda^i U\otimes_k L(s(\lambda))^{[1]})\]
As the image of the $i$-th differential is $L(\lambda,G(n,r))$ by Proposition \ref{soc for rlambda=fw} and its 
cohomology computes as 
\[H^i(\Omega_r^{\bullet}\otimes_k L(s(\lambda))^{[1]})\cong T_r^{\ast}((\Omega_{r-1}^i)^{(1)}\otimes_k L(s(\lambda)))\]
the claim follows in the same fashion as in the case $r=1$. Again the case where $r(\lambda)$ is neither $0$ nor fundamental follows from
Proposition \ref{soc for rlambda not 0 or fw}.
\end{proof}
\begin{Rem}
Let $\Char(k)=2$. Then for $n=r=1$, the case where $r(\lambda)$ is neither $0$ nor fundamental does not occur. But this is the only case
where the assumption $\Char(k)\neq 2$ is necessary in order to apply Proposition \ref{soc for rlambda not 0 or fw}. That is, for 
$n=r=1$, we also get a surjection
\[\binom{\I}{P_1^{\ast}}:\Rep(\mathbb{G}_m)\oplus\Rep((\mathbb{G}_m)^{(1)})\rightarrow\Rep(G(1,1))\]
In the case that $r\geq 2$ or $n\geq 2$, the author does not know wether the maps in question are 
surjective.
\end{Rem}

So far, we only described the abelian group structure of $\Rep(G(n,r))$. We also want to understand its ring structure and 
the kernels of the maps of the theorem. For both topics, the main tool is the restriction map
\[\res:\Rep(G(n,r))\rightarrow \Rep(\GL_n)\]
In fact, this map is injective.
\begin{Lem}
The restriction map
\[\res:\Rep(G(n,r))\rightarrow\Rep(\GL_n)\]
is injective.
\end{Lem}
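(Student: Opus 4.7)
The plan is to factor the restriction through the maximal torus $T\subset\GL_n$, reducing the problem to showing that the $T$-characters $\{\ch L(\lambda,G(n,r))\}_{\lambda\in X(T)_+}\subset\Z[X(T)]$ are linearly independent. The strategy is to identify, for each $\lambda$, one distinguished weight -- namely the lex-lowest weight $w_0\lambda$, where $w_0\in S_n$ is the longest Weyl element of $\GL_n$ -- that appears in $\res L(\lambda,G(n,r))$ with multiplicity exactly one, and then run a leading-term argument.

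I will fix the lexicographic order on $X(T)\cong\Z^n$ with $\epsilon_1>\cdots>\epsilon_n>0$. Using the embedding $L(\lambda,G(n,r))\hookrightarrow\I(L(\lambda))\cong R(n,r)\otimes_k L(\lambda)$ from Example~\ref{Gnr-action on I}, the $T$-weights of $R(n,r)$ are $\sum_{i=1}^n m_i\epsilon_i$ with $0\le m_i<p^r$, all $\ge 0$ in lex, with the minimum $0$ of multiplicity one. A short verification shows that dominance refines lex for $\GL_n$-weights: if $\nu-\mu\in\Z_{\ge 0}\{\epsilon_i-\epsilon_{i+1}\}$ is nonzero, then its first nonzero coordinate is positive. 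Combined with the standard fact that weights of $L(\lambda)$ lie between $w_0\lambda$ and $\lambda$ in dominance, with $w_0\lambda$ extremal of multiplicity one, this yields that the lex-lowest weight of $R(n,r)\otimes_k L(\lambda)$ is $w_0\lambda$ of multiplicity one. Proposition~\ref{prop: param of irred H rep for triang} then supplies $L(\lambda,G(n,r))^{G^-}\cong L(\lambda)$ as $\GL_n$-representations, so $L(\lambda,G(n,r))$ does contain a weight-$w_0\lambda$ vector, whence its lex-lowest weight is exactly $w_0\lambda$ with multiplicity one.

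With this in hand, injectivity follows by a triangularity argument: if $x=\sum_\lambda a_\lambda[L(\lambda,G(n,r))]$ were a nonzero element of $\ker(\res)$, I would choose $\lambda_0$ with $w_0\lambda_0$ lex-minimal among $\{w_0\lambda:a_\lambda\ne 0\}$ (well-defined since $w_0$ is a bijection of $X(T)$). For every other $\lambda$ with $a_\lambda\ne 0$, all weights of $\res L(\lambda,G(n,r))$ are $\ge w_0\lambda>w_0\lambda_0$ in lex, so they do not contribute to $e^{w_0\lambda_0}$; hence the coefficient of $e^{w_0\lambda_0}$ in $\ch\res(x)$ would equal $a_{\lambda_0}\ne 0$, contradicting $\res(x)=0$. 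The main technical point is the dominance-to-lex comparison, a routine calculation with simple roots; the rest is bookkeeping organized around the description of $L(\lambda,G(n,r))^{G^-}$.
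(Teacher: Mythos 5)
Your proof is correct. Both you and the paper run a leading‑term argument hinging on the same key fact from Proposition~\ref{prop: param of irred H rep for triang}, namely that $L(\lambda,G(n,r))^{G^-}\cong L(\lambda)$ sits as the ``bottom'' piece of $L(\lambda,G(n,r))\subset\I(L(\lambda))\cong R(n,r)\otimes_k L(\lambda)$. The difference is in the choice of grading used to make ``bottom'' precise. You pass to the character ring $\Z[X(T)]$, pick the lexicographic order, and isolate the lex‑lowest $T$-weight $w_0\lambda$, which requires the dominance‑refines‑lex check, the extremal weight $w_0\lambda$ of multiplicity one, and keeping track of weight multiplicities. The paper instead uses the much coarser $\mathbb{G}_m$-grading via the degree map $\deg:X(T)\to\Z$, $\deg(\epsilon_i)=1$: since simple roots have degree $0$, all of $L(\lambda)$ lives in the single degree $\deg(\lambda)$, and $R(n,r)$ has degree $\geq 0$ with degree‑$0$ part $k$, so the lowest‑degree piece of $L(\lambda,G(n,r))$ is exactly the irreducible $\GL_n$-module $L(\lambda)$. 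This lets the paper write $[\res L(\lambda,G(n,r))]=[L(\lambda)]+\sum_{\deg\mu>\deg\lambda}m_\mu[L(\mu)]$ and argue directly with the $\Z$-basis $\{[L(\mu)]\}$ of $\Rep(\GL_n)$, bypassing $\Z[X(T)]$, $w_0$, and individual weight multiplicities entirely. Your route is valid but carries more bookkeeping; the $\mathbb{G}_m$-degree trick buys a noticeably shorter argument.
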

\begin{proof}
As $\mathbb{G}_m\subset \GL_n\subset G(n,r)$, we can study $\mathbb{G}_m$-weight spaces for 
$G(n,r)$-representations. Furthermore, the $\mathbb{G}_m$-weight space filtration of 
each $GL_n$-representation is $\GL_n$-invariant. Denote by $\deg:X(T)\rightarrow \Z$ 
the degree map which is induced by $\deg(\epsilon_i)=1$ for all $i=1,\ldots,n$. Then for all $n\in\Z$ and 
$\GL_n$-representations $V$, we get for the $n$-th $\mathbb{G}_m$-weight space
\[V_n=\bigoplus_{\lambda\in\deg^{-1}(n)}V_\lambda\]
Now consider $L(\lambda,G(n,r))$ for $\lambda\in X(T)_+$, whose classes form a $\Z$-basis of $\Rep(G(n,r))$. 
Its lowest $\mathbb{G}_m$-weight space is $L(\lambda)$ of weight $s=\deg(\lambda)$. Hence, we obtain
\[[\res L(\lambda,G(n,r))]=[L(\lambda)]+\sum_{\substack{\mu\in X(T)_+\\\deg(\mu)>s}}m_\mu[L(\mu)]\in \Rep(\GL_n)\]
where $m_{\mu}$ is the multiplicity of $L(\mu)$ in $L(\lambda,G(n,r))$. As $[L(\lambda)]$ with $\lambda\in X(T)_+$ 
form a $\Z$-basis of $\Rep(\GL_n)$, $\res$ maps a $\Z$-basis of $\Rep(G(n,r))$ to a linearly independent set. 
Hence $\res$ is injective.
\end{proof}
Now $\Rep(\GL_n)$ is a
$\Rep((\GL_n)^{(1)})$-algebra by the pullback of the first Frobenius
\[F^{\ast}:\Rep((\GL_n)^{(1)})\rightarrow\Rep(\GL_n)\]
Note that under the identification $\GL_n^{(1)}\cong\GL_n$ the ring
homomorphism $F^{\ast}$ is the $p$-th Adams operation $\psi^p$ on the
$\lambda$-ring $\Rep(\GL_n)$. We fix an $r$ and endow the direct
sum
\[\Rep(\GL_n)\oplus \Rep((\GL_n)^{(1)})\]
with a ring structure by assigning
\[(b,a)\cdot(b',a'):=(F^{\ast}(a)b'+F^{\ast}(a')b+[R(n,r)]bb',aa')\]
where $[R(n,r)]$ is the class of the $G(n,r)$-representation $R(n,r)$
restricted to the subgroup $\GL_n$. Now the inclusion
\[\Rep((\GL_n)^{(1)})\hookrightarrow \Rep(\GL_n)\oplus \Rep((\GL_n)^{(1)})\]
gives an $\Rep((\GL_n)^{(1)})$-algebra structure. Furthermore the map
\[\binom{[R(n,r)]\cdot(-)}{F^{\ast}}:\Rep(\GL_n)\oplus
\Rep((\GL_n)^{(1)})\rightarrow\Rep(\GL_n)\]
is an $\Rep((\GL_n)^{(1)})$-algebra morphism. Finally note that
\[\res\circ \I=[R(n,r)]\cdot(-)\]
on $\Rep(\GL_n)$.

Let us consider the case $r=1$. In order to show that
\[\binom{\I}{P_1^{\ast}}:\Rep(\GL_n)\oplus \Rep((\GL_n)^{(1)})\rightarrow
\Rep(G(n,1))\]
is a ring homomorphism, it suffices to show this after composition with $\res$.
But this is precisely the ring homomorphism $\binom{[R(n,1)]\cdot(-)}{F^{\ast}}$
considered above.

Now consider the case $r\geq 2$. Then we introduce a ring structure on
\[\Rep(\GL_n)\oplus\Rep(G(n,r-1)^{(1)})\]
by
\[(b,a)\cdot(b',a'):=(F^{\ast}(\res(a))b'+F^{\ast}(\res(a'))b+[R(n,r)]bb',aa')\]
where we use the restriction
\[\res:\Rep(G(n,r-1)^{(1)})\hookrightarrow \Rep((\GL_n)^{(1)})\]
This makes
\[\Rep(\GL_n)\oplus\Rep(G(n,r-1)^{(1)})\xrightarrow{\id\oplus\res}
\Rep(\GL_n)\oplus \Rep((\GL_n)^{(1)})\]
into a ring injection. In order to show that
\[\binom{\I}{T_r^{\ast}}:\Rep(\GL_n)\oplus
\Rep(G(n,r-1)^{(1)})\rightarrow
\Rep(G(n,r))\]
is a ring homomorphism, we compose with $\res$ again. But this composition
coincides with the ring injection $\id\oplus\res$ followed by the ring
homomorphism
\[\binom{[R(n,r)]\cdot(-)}{F^{\ast}}:\Rep(\GL_n)\oplus
\Rep((\GL_n)^{(1)})\rightarrow\Rep(\GL_n)\]
considered above.

Now we come to kernel elements. For all $r\geq 0$, let us consider the element
\[\delta_r:=\sum_{i=0}^n (-1)^{n-i}[\Lambda^i U^{(r)}]\in\Rep((\GL_n)^{(r)})\]
where $U=k^n$ and for $r\geq 1$ the $r-1$-th Frobenius
\[F^{r-1}:(\GL_n)^{(1)}\rightarrow(\GL_n)^{(r)}\]
\begin{Prop}\label{Prop}
For all $r\geq 1$, the kernel of
\[\binom{[R(n,r)]\cdot(-)}{F^{\ast}}:\Rep(\GL_n)\oplus
\Rep((\GL_n)^{(1)})\rightarrow\Rep(\GL_n)\]
is generated by $(\delta_0,-(F^{r-1})^{\ast}(\delta_r))$ as an
$\Rep((\GL_n)^{(1)})$-module.
\end{Prop}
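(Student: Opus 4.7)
The plan is to translate everything into explicit characters. Identify $\Rep(\GL_n)\cong\Z[X_1^{\pm},\ldots,X_n^{\pm}]^{S_n}$ and $\Rep((\GL_n)^{(1)})\cong\Z[Y_1^{\pm},\ldots,Y_n^{\pm}]^{S_n}$ so that $F^{\ast}$ becomes the map $Y_j\mapsto X_j^p$. The monomial basis of $R(n,r)$ yields
\[
[R(n,r)] \;=\; \prod_j(1+X_j+\cdots+X_j^{p^r-1}),
\]
while unraveling elementary symmetric functions gives $\delta_0=(-1)^n\prod_j(1-X_j)$ and $(F^{r-1})^{\ast}\delta_r=(-1)^n\prod_j(1-Y_j^{p^{r-1}})$. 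With this dictionary, the proposed generator lies in the kernel by the direct identity $[R(n,r)]\cdot\delta_0=(-1)^n\prod_j(1-X_j^{p^r})=F^{\ast}\bigl((F^{r-1})^{\ast}\delta_r\bigr)$.

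Since $\Rep(\GL_n)$ is a domain and $[R(n,r)]$ a nonzerodivisor, the kernel is identified via $(b,a)\mapsto -a$ with $K':=\{a\in\Rep((\GL_n)^{(1)})\mid F^{\ast}(a)\in[R(n,r)]\Rep(\GL_n)\}$, and the $\Rep((\GL_n)^{(1)})$-module structure becomes ordinary multiplication inside $\Rep((\GL_n)^{(1)})$. It thus suffices to show $K'=(F^{r-1})^{\ast}\delta_r\cdot\Rep((\GL_n)^{(1)})$. The crucial algebraic input is the factorization (obtained by splitting the exponent as $p\alpha+\beta$)
\[
1+X_j+\cdots+X_j^{p^r-1} \;=\; (1+Y_j+\cdots+Y_j^{p^{r-1}-1})(1+X_j+\cdots+X_j^{p-1}),
\]
giving $[R(n,r)]=R_Y\cdot R_X$ with $R_Y:=\prod_j(1+Y_j+\cdots+Y_j^{p^{r-1}-1})\in\Z[Y^{\pm}]^{S_n}$ and $R_X:=[R(n,1)]\in\Z[X^{\pm}]^{S_n}$.

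Given $a\in K'$, the idea is to peel off the two factors in turn. From $F^{\ast}(a)=R_Y R_X\cdot b$ and the fact that $\Z[X^{\pm}]$ is free of rank $p^n$ over $\Z[Y^{\pm}]$ on the basis $\{X^I:I\in\{0,\ldots,p-1\}^n\}$ (which forces $\Z[X^{\pm}]\cap\mathrm{Frac}(\Z[Y^{\pm}])=\Z[Y^{\pm}]$), the quotient $a_1:=a/R_Y$ lies in $\Z[Y^{\pm}]$, and in $\Z[Y^{\pm}]^{S_n}$ by cancellation of the nonzerodivisor $R_Y$. The remaining relation $a_1=R_X\cdot b$ shows that each cyclotomic factor $\Phi_p(X_j)=1+X_j+\cdots+X_j^{p-1}$ divides $a_1$ in $\Z[X^{\pm}]$. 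But $a_1$ depends on $X_j$ only through $Y_j=X_j^p$, so evaluating $X_j$ at a primitive $p$-th root of unity sends $Y_j\to 1$ and forces $a_1|_{Y_j=1}=0$, i.e.\ $(1-Y_j)\mid a_1$ in $\Z[Y^{\pm}]$. These factors being pairwise coprime, $\prod_j(1-Y_j)$ divides $a_1$, and consequently $R_Y\prod_j(1-Y_j)=\prod_j(1-Y_j^{p^{r-1}})$ divides $a$. Up to sign this is $(F^{r-1})^{\ast}\delta_r$, so $-a=h\cdot(F^{r-1})^{\ast}\delta_r$ for a unique $h\in\Rep((\GL_n)^{(1)})$; the matching first coordinate $b=F^{\ast}(h)\delta_0$ is then forced by cancelling $[R(n,r)]$ in $[R(n,r)]b=-F^{\ast}(a)=F^{\ast}(h)[R(n,r)]\delta_0$.

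The main technical obstacle is the bookkeeping between the three rings $\Z[X^{\pm}]$, $\Z[Y^{\pm}]$, and their $S_n$-invariants: each divisibility has to be carried out in the non-symmetric Laurent rings (where UFD arguments and cyclotomic irreducibility are available) and then transported back to invariants using that the quotient of a symmetric element by a symmetric nonzerodivisor is automatically symmetric once it is known to lie in the Laurent ring. Verifying the free-module intersection $\Z[X^{\pm}]\cap\mathrm{Frac}(\Z[Y^{\pm}])=\Z[Y^{\pm}]$ and extracting $(1-Y_j)$ from $\Phi_p(X_j)$-divisibility are the two steps that do the real work.
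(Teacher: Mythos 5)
Your proof is correct, and at the crucial step it takes a genuinely different route from the paper. Both arguments begin the same way: translate to $\Z[X(T)]$ via characters, exploit the factorization $[R(n,r)]=R_Y\cdot[R(n,1)]$ (the paper's version is $U_r=U_1\cdot\psi^p(U_{r-1})$), reduce the kernel computation to a divisibility statement in the non-symmetric Laurent ring, and use the rank-$p^n$ freeness of $\Z[X^{\pm}]$ over $\Z[X^{\pm p}]$ together with symmetric cancellation to return to $S_n$-invariants. The difference is how the key divisibility is extracted. The paper defines the height-one primes $P_i=\langle\Phi_p(t_i)\rangle\subset\Z[t^{\pm}]$ and $Q_i=\langle t_i^p-1\rangle\subset\Z[t^{\pm p}]$ and proves $Q_i=P_i\cap\Z[t^{\pm p}]$ by the going-down theorem for the integral extension of integrally closed domains $\Z[t^{\pm p}]\subset\Z[t^{\pm}]$; the $r\ge 2$ case is then reduced to $r=1$. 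You instead handle all $r\ge 1$ at once, peel off the $R_Y$-factor via the free-module intersection, and then obtain $(1-Y_j)\mid a_1$ by specializing $X_j$ at a primitive $p$-th root of unity: divisibility of $F^{\ast}(a_1)$ by $\Phi_p(X_j)$ forces $a_1|_{Y_j=1}=0$. This specialization argument is more elementary (no going-down, no discussion of height-one primes or incomparability), at the cost of some explicit bookkeeping between $\Z[X^{\pm}]$, $\Z[Y^{\pm}]$, and their invariants, which you handle correctly. Both proofs are valid; yours is arguably the more self-contained of the two.
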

For $r=1$, $\Rep(G(n,1))$ is a $\Rep((\GL_n)^{(1)}$-algebra by $P_1^{\ast}$. Then
\[\binom{\I}{P_1^{\ast}}:\Rep(\GL(U))\oplus
\Rep(\GL(U)^{(1)})\rightarrow
\Rep(G(n,1))\]
is an $\Rep((\GL_n)^{(1)}$-algebra map. The Proposition implies that its kernel
is generated by $(\delta_0,-\delta_1)$ as an $\Rep((\GL_n)^{(1)})$-module as
\[\res\circ\binom{\I}{P_1^{\ast}}=\binom{[R(n,1)]\cdot(-)}{F^{\ast}}\]
and $\res:\Rep(G(n,1))\rightarrow\Rep(\GL_n)$ is an injective $\Rep((\GL_n)^{(1)})$-algebra map. 

For $r\geq 2$, we do not even have a $\Rep((\GL_n)^{(1)}$-module structure on $\Rep(G(n,r))$. But we can study the commutative diagram
\[\xymatrix{\Rep(\GL_n)\oplus\Rep(G(n,r-1)^{(1)}\ar[d]_{\id\oplus\res}\ar[rr]^-{\binom{\I}{T_r^{\ast}}}&&\Rep(G(n,r))\ar[d]^{\res}\\
\Rep(\GL_n)\oplus\Rep((\GL_n)^{(1)})\ar[rr]^-{\binom{[R(n,r)]\cdot(-)}{F^{\ast}}}&&\Rep(\GL_n)}\]
and get the following.
\begin{Cor}\label{Cor}
 For all $r\geq 2$, the image 
\[(\id\oplus\res)(\Ker(\I+T_r^{\ast}))\subset \Rep(\GL_n)\oplus\Rep((\GL_n)^{(1)})\]
coincides with the kernel of
\[\binom{[R(n,r)]\cdot(-)}{F^{\ast}}:\Rep(\GL_n)\oplus\Rep((\GL_n)^{(1)})\rightarrow\Rep(\GL_n)\]
Furthermore
\[\Ker\binom{\I}{T_r^{\ast}}=\{(\delta_0F^{\ast}(a),-\I_{r-1}(\delta_1 a))\mid a\in\Rep((\GL_n)^{(1)})\}\]
\end{Cor}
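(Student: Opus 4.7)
The plan is to exploit the commutative square displayed just above the corollary. The restriction $\res:\Rep(G(n,r))\to\Rep(\GL_n)$ on the right column is injective by the preceding lemma, so a diagram chase gives
\[\Ker\binom{\I}{T_r^{\ast}}\;=\;(\id\oplus\res)^{-1}\Ker\binom{[R(n,r)]\cdot(-)}{F^{\ast}},\]
and taking images yields
\[(\id\oplus\res)\Bigl(\Ker\binom{\I}{T_r^{\ast}}\Bigr)\;=\;\Ker\binom{[R(n,r)]\cdot(-)}{F^{\ast}}\cap\im(\id\oplus\res).\]
The first assertion thus reduces to showing that the bottom kernel is already contained in $\im(\id\oplus\res)$, and the explicit parametrization of the top kernel will follow by uniqueness of lifts, once again by injectivity of $\res$.

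Next I would exhibit a canonical lift. Set $c:=\I_{r-1}(\delta_1)\in\Rep(G(n,r-1)^{(1)})$, where $\I_{r-1}$ is the induction functor for the triangulation of $G(n,r-1)^{(1)}$, applied componentwise to $\delta_1\in\Rep((\GL_n)^{(1)})$. By the Frobenius twist of Example \ref{Gnr-action on I}, $\res(c)=[R(n,r-1)^{(1)}]\cdot\delta_1$ in $\Rep((\GL_n)^{(1)})$. I claim $(\delta_0,-c)\in\Ker\binom{\I}{T_r^{\ast}}$: since $\res\circ T_r^{\ast}=F^{\ast}\circ\res$ and $\res\circ\I=[R(n,r)]\cdot(-)$, injectivity of $\res$ reduces the claim to the identity
\[[R(n,r)]\,\delta_0\;=\;[R(n,r-1)]^{[1]}\cdot F^{\ast}(\delta_1)\qquad\text{in }\Rep(\GL_n),\]
which drops out by taking Euler characteristics with alternating signs in the Cartier corollary $H^i(\Omega_r^{\bullet})\cong T_r^{\ast}((\Omega_{r-1}^i)^{(1)})$ and restricting to $\GL_n$, using $\Omega_r^i\cong R(n,r)\otimes\Lambda^i U$. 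The same calculation with $\delta_1$ replaced by $\delta_1 a$ shows $(F^{\ast}(a)\delta_0,-\I_{r-1}(\delta_1 a))\in\Ker\binom{\I}{T_r^{\ast}}$ for every $a\in\Rep((\GL_n)^{(1)})$.

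To finish, I would invoke Proposition \ref{Prop}, which identifies the bottom kernel as the cyclic $\Rep((\GL_n)^{(1)})$-module generated by $(\delta_0,-(F^{r-1})^{\ast}\delta_r)$. Subtracting the newly produced element $(\delta_0,-[R(n,r-1)^{(1)}]\delta_1)$, known to lie in the same kernel, yields $(0,(F^{r-1})^{\ast}\delta_r-[R(n,r-1)^{(1)}]\delta_1)\in\Ker\binom{[R(n,r)]\cdot(-)}{F^{\ast}}$, so $F^{\ast}$ applied to its second coordinate vanishes. Since $F^{\ast}$ coincides with the Adams operation $\psi^p$ on the representation ring and is therefore injective, this forces $(F^{r-1})^{\ast}\delta_r=[R(n,r-1)^{(1)}]\delta_1$. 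Consequently the entire bottom kernel equals $\{(F^{\ast}(a)\delta_0,-[R(n,r-1)^{(1)}]\delta_1\,a)\mid a\in\Rep((\GL_n)^{(1)})\}$, lies in $\im(\id\oplus\res)$ with unique lifts $\{(F^{\ast}(a)\delta_0,-\I_{r-1}(\delta_1 a))\mid a\}$, and both assertions of the corollary follow. The main technical point is the Cartier-based Euler characteristic identity; the rest is diagram chasing plus an appeal to Proposition \ref{Prop}.
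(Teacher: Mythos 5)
Your argument is correct, but it reaches the key identity $(F^{r-1})^{\ast}\delta_r=[R(n,r-1)^{(1)}]\delta_1$ by a different, more roundabout route than the paper. The paper simply computes $(\psi^p)^{r-1}(\delta)=U_{r-1}\delta$ in $\Z[X(T)]$, which immediately exhibits $(F^{r-1})^{\ast}(\delta_r)a=\res\,\I_{r-1}(\delta_1 a)$ and hence $\Ker\binom{[R(n,r)]\cdot(-)}{F^{\ast}}\subset\im(\id\oplus\res)$; the corollary then drops out of the commutative square and injectivity of $\res$. You instead first prove that $(\delta_0,-\I_{r-1}(\delta_1))$ lies in the \emph{top} kernel via an Euler-characteristic argument with the Cartier Corollary, push it down, subtract from the Proposition's generator $(\delta_0,-(F^{r-1})^{\ast}\delta_r)$, and invoke injectivity of $F^{\ast}=\psi^p$ to extract the same identity. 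This is more structural — it uses the representation-theoretic form of Cartier directly rather than its character shadow, and it is the same Euler-characteristic trick the paper uses in the proof of the surjectivity theorem — but it costs an extra pass around the diagram to re-derive a fact the paper gets by an instant computation.

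Two small points you should tighten. First, the Cartier Corollary $H^i(\Omega_r^{\bullet})\cong T_r^{\ast}((\Omega_{r-1}^i)^{(1)})$ is stated in the paper only for $1\leq i\leq n$, while your alternating sum runs from $i=0$; you need the $i=0$ case $H^0(\Omega_r^{\bullet})\cong T_r^{\ast}(R(n,r-1)^{(1)})$ as well. It does hold (the proof of that corollary, via Proposition \ref{compare deRhams} and Lemma \ref{comm of ind Ui with I and L}, goes through verbatim for $i=0$), but you should say so. Second, for the $\delta_1 a$ case you should cite the twisted version $H^i(\Omega_r^{\bullet}\otimes_k V^{[1]})\cong T_r^{\ast}((\Omega_{r-1}^i)^{(1)}\otimes_k V)$ recorded at the end of the Cartier section, or observe that $\I_{r-1}$ and $T_r^{\ast}$ are $\Rep((\GL_n)^{(1)})$-module maps by Lemma \ref{Tensor Id for I}, rather than asserting "the same calculation" without anchor. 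With those two clarifications your proof is complete.
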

Before we prove the Proposition and the Corollary, we need another tool, the \emph{character map}
\begin{eqnarray*}
\Rep(\GL_n) &\xrightarrow{\ch}&\Z[X(T)]\\
 {[}V{]}&\mapsto&\sum_{\lambda\in X(T)}\dim(V_{\lambda})e(\lambda)
\end{eqnarray*}
where $e(\lambda)$ is the basis element of $\Z[X(T)]$ corresponding to $\lambda\in X(T)$. Due to the 
highest weight characterization of the $L(\lambda)$, the character map maps this $\Z$-basis of $\Rep(\GL_n)$ to a 
linearly independent set in $\Z[X(T)]$. Hence it is injective. It is well known that its image is precisely $\Z[X(T)]^W$ 
where $W=S_n$ is the Weyl group.

Now let us write the indeterminant $t_i$ for $e(\varepsilon_i)$ and denote by $s_i$ the $i$-th elementary symmetric 
polynomial in the $t_i$. Then
\[\Z[X(T)]=\Z[t_1^{\pm 1},\ldots,t_n^{\pm 1}]\]
and
\[\Z[X(T)]^W=\Z[s_1,\ldots,s_n,s_n^{-1}]\]
As $F^{\ast}:\Rep((\GL_n)^{(1)})\rightarrow \Rep(\GL_n)$ acts on the $T$-weights by the $p$-th power, it 
corresponds to the $p$-th Adams operation $\psi^p$ on $\Z[X(T)]$ which is given by $\psi^p(t_i)=t_i^p$. Note that
\begin{eqnarray*}
\ch([\Lambda^i U])&=&s_i\\
\ch(\delta_r)&=&\sum_{i=0}^n(-1)^{n-i}s_i=\prod_{i=1}^n(t_i-1)=:\delta \\
\ch([R(n,r)])&=&\prod_{i=1}^n\frac{t_i^{p^r}-1}{t_i-1}=:U_r
\end{eqnarray*}
Hence
\[\ch((F^r)^{\ast}(\delta_r))=(\psi^p)^r(\delta)=U_r\delta=\ch([R(n,r]\delta_0)\]
which implies that $(\delta_0,-(F^{r-1})^{\ast}(\delta_r))$ lies in the kernel of
\[\binom{[R(n,r)]\cdot(-)}{F^{\ast}}:\Rep(\GL_n)\oplus
\Rep((\GL_n)^{(1)})\rightarrow\Rep(\GL_n)\]
Before we prove the whole Proposition, we first deduce the Corollary.
\begin{proof}[Proof of \ref{Cor}] 
According to the Proposition, the kernel of $\binom{[R(n,r)]\cdot(-)}{F^{\ast}}$ is generated by 
$(\delta_0,-(F^{r-1})^{\ast}(\delta_r))$ as an $\Rep((\GL_n)^{(1)})$-module. Hence the kernel elements are 
those of the form
\[(\delta_0F^{\ast}(a),-(F^{r-1})^{\ast}(\delta_r)a)\]
for all $a\in\Rep((\GL_n)^{(1)})$. Furthermore, $(\psi^p)^{r-1}(\delta)=U_{r-1}\delta$. Hence
\[(F^{r-1})^{\ast}(\delta_r)a=[R(n,r-1)^{(1)}]\delta_1 a=\res\I_{r-1}(\delta_1 a)\]
lies in the image of
\[\res:\Rep(G(n,r-1)^{(1)}\rightarrow \Rep((\GL_n)^{(1)})\]
That is, 
\[\Ker\binom{[R(n,r)]\cdot(-)}{F^{\ast}}\subset \im(\id\oplus\res)\]
Now the assertion follows from the commutativity of
\[\xymatrix{\Rep(\GL_n)\oplus\Rep(G(n,r-1)^{(1)}\ar[d]_{\id\oplus\res}\ar[rr]^-{\binom{\I}{T_r^{\ast}}}&&\Rep(G(n,r))\ar[d]^{\res}\\
\Rep(\GL_n)\oplus\Rep((\GL_n)^{(1)})\ar[rr]^-{\binom{[R(n,r)]\cdot(-)}{F^{\ast}}}&&\Rep(\GL_n)}\]
and the injectivity of $\res$.
\end{proof}
Now we give the proof of the Proposition.
\begin{proof}[Proof of \ref{Prop}]
We will prove the Proposition in terms of $\Z[X(T)]$. In fact, we prove the following claim: The kernel of the map
\[\binom{U_r\cdot(-)}{\psi^p}:\Z[X(T)]\oplus\Z[X(T)]\rightarrow\Z[X(T)]\]
coincides with
\[\{(\delta\psi^p(a),-(\psi^p)^{r-1}(\delta)a)\mid a\in\Z[X(T)]\}\]
Then the Proposition follows by passing to $S_n$-invariants as $\Z[X(T)]$ is factorial.

We start with the case $r=1$. As $\psi^p$ and $U_1\cdot(-)$ are injective, it suffices to show
\[\Z[t_1^{\pm p},\ldots,t_n^{\pm p}]\cap U_1\Z[t_1^{\pm 1},\ldots,t_n^{\pm 1}]=(U_1\delta)\Z[t_1^{\pm p},\ldots,t_n^{\pm p}]\]
For this, define the ideals
\[P_i:=\langle \frac{t_i^p-1}{t_i-1}\rangle\subset\Z[t_1^{\pm 1},\ldots,t_n^{\pm 1}]\]
and 
\[Q_i:=\left\langle t_i^p-1\right\rangle\Z[t_1^{\pm p},\ldots,t_n^{\pm p}]\]
Both $P_i$ and $Q_i$ are prime ideals of height $1$ as they are generated by irreducible elements in factorial rings.
We claim that the inclusion
\[Q_i\subset P_i\cap \Z[t_1^{\pm p},\ldots,t_n^{\pm p}]\]
is an equality. As $\Z[t_1^{\pm p},\ldots,t_n^{\pm p}]$ is factorial, it is integrally closed. Thus the ``going-down'' 
Theorem \cite{Mat86}*{Theorem 9.4} implies that $P_i\cap \Z[t_1^{\pm p},\ldots,t_n^{\pm p}]$
is also a prime ideal of height $1$. Whence the equality. As
\[\langle U_1\rangle=P_1\cdots P_n=P_1\cap\ldots\cap P_n\subset \Z[t_1^{\pm 1},\ldots,t_n^{\pm 1}]\]
and
\[\langle U_1\delta\rangle=Q_1\cdots Q_n=Q_1\cap\ldots\cap Q_n\subset \Z[t_1^{\pm p},\ldots,t_n^{\pm p}]\]
the case $r=1$ follows.

Now we consider the case $r\geq 2$. Then we can factor our map as follows since $U_1\psi^p(U_{r-1})=U_r$.
\[\xymatrix{\Z[X(T)]\oplus\Z[X(T)]\ar[rr]^-{\binom{U_r\cdot(-)}{\psi^p}}\ar[d]_-{\psi^p(U_{r-1})\cdot(-)\oplus\id}&&\Z[X(T)]\\
 \Z[X(T)]\oplus\Z[X(T)]\ar[urr]_-{\binom{U_1\cdot(-)}{\psi^p}}}\]
By the case $r=1$ the kernel of the map $U_1\cdot(-)+\psi^p$ consists of the elements of the form
\[(\delta\psi^p(a),-\delta a)\]
for $a\in\Z[X(T)]$. As $\psi^p(U_{r-1})\cdot(-)\oplus\id$ is injective and no prime factor $t_i-1$ of $\delta$ divides 
$\psi^p(U_{r-1})$, the images of the elements of the kernel of $\binom{U_r\cdot(-)}{\psi^p}$ are those of the above type where $U_{r-1}$ divides $a$. 
As $\delta U_{r-1}=(\psi^p)^{r-1}(\delta)$, the claim for $r\geq 2$ follows.
\end{proof}
\begin{bibdiv}
\begin{biblist}
\bibselect{bib}
\end{biblist}

\end{bibdiv}

\end{document}